\newtheorem{theorem}{Theorem}[section]
\theoremstyle{definition}
\theoremstyle{definition}
\newtheorem{cor}{Corollary}[section]
\newtheorem{lem}{Lemma}[section]
\newtheorem*{rem}{Remark}
\newcommand{\R}{\mathbb{R}}
\newcommand{\ukone}{u^{k+1}}
\newcommand{\uk}{u^k}
\newcommand{\ekone}{e^{k+1}}
\newcommand{\ek}{e^k}
\newcommand{\tkone}{T^{k+1}}
\newcommand{\etkone}{e_T^{k+1}}
\author{Elizabeth Hawkins\thanks{\small
	School of Mathematical and Statistical Sciences, Clemson University, Clemson, SC, 29364, evhawki@clemson.edu.}}
\title{Accelerating convergence of a natural convection solver by continuous data assimilation}
\begin{document}
\maketitle
\begin{abstract}
The Picard iteration for the Boussinesq model of natural convection can be an attractive solver because it stably decouples the fluid equations from the temperature equation (for contrast, the Newton iteration does not stably decouple).  However, the convergence of Picard for this system is only linear and slows as the Rayleigh number increases, eventually failing for even moderately sized Rayleigh numbers.  We consider this solver in the setting where sparse solution data is available, e.g. from data measurements or solution observables, and enhance Picard by incorporating the data into the iteration using a continuous data assimilation (CDA) approach.  We prove that our approach scales the linear convergence rate by $H^{1/2}$, where $H$ is the characteristic spacing of the measurement locations. This implies that when Picard is converging, CDA will accelerate convergence, and when Picard is not converging, CDA (with enough data) will enable convergence.  In the case of noisy data, we prove that the linear convergence rate of the nonlinear residual is similarly scaled by $H^{1/2}$ but the accuracy is limited by the accuracy of the data. Several numerical tests illustrate the effectiveness of the proposed method, including when the data is noisy. These tests show that CDA style nudging adapted to an iteration (instead of a time stepping scheme) enables convergence at much higher $Ra$. 
\end{abstract}

\section{Introduction}
Fluid flows with varying temperature (or density) can be modeled by the Boussinesq system for non-isothermal flow/natural convection. 
This multiphysics model can be observed in nature in atmospheric models and katabatic winds, in ventilation design, dense gas dispersion, insulation with double pane window, and solar collectors \cite{Songul}, to just name a few examples. We consider the Boussinesq system on a finite, connected domain $\Omega\subset \R^d$ ($d=2,3$) with  boundary $\Gamma_1\cup\Gamma_2$. We denote $u$ to be the fluid velocity, $p$ is the pressure, $T$ is the temperature (or density), $\nu>0$ is the kinematic viscosity of the fluid, $\kappa>0$ the thermal diffusivity, $f$ the external forcing term, and $R_i>0$ the Richardson number which accounts for the gravitational force and thermal expansion of the fluid. This system is given by
\begin{equation}\label{Bouss}
\begin{cases}
u\cdot \nabla u +\nabla p -\nu\Delta u &= f +R_i  (0 ~T)^T \text{ in }\Omega,\\
\nabla\cdot u&=0 \text{ in }\Omega,\\
u\cdot \nabla T-\kappa \Delta T&=g \text{ in }\Omega,\\
\end{cases}
\end{equation}
with no-slip boundary conditions for velocity and mixed homogenous Dirichlet and homogenous Neumann for temperature (the latter of which corresponds to perfect insulation):
\begin{equation}\label{BoussBC}
\begin{cases}
u&= 0 \text{ on }\partial\Omega,\\
T&=0 \text{ on }\partial\Gamma_1,\\
\nabla T\cdot n&=0 \text{ on }\partial\Gamma_2.
\end{cases}
\end{equation}

The physical constants that the model relies on are expressed as the Reynolds number $Re$, the Prandtl number $Pr$, and the Rayleigh number $Ra$, which are given by
\begin{equation*}
Re=\nu^{-1},~~~~Pr=\frac{\nu}{\kappa}, ~~~~ Ra=Ri ~ Re^2 ~ Pr.\\
\end{equation*}
For any set of problem data ($\nu, ~Ri, ~\kappa,~f,~g$), the system \eqref{Bouss}-\eqref{BoussBC} is known to admit weak solutions \cite{Layton89}. Under a smallness condition on this data, the system is known to be well-posed; for sufficiently high $Ra$, this system may lose uniqueness \cite{Layton89}.\\

The Picard iteration is a common method for solving the system \eqref{Bouss}-\eqref{BoussBC}, and is given by
\begin{equation}\label{eq:Picard}
\begin{cases}
\uk\cdot \nabla \ukone+\nabla p^{k+1} -\nu\Delta \ukone &= f +R_i  (0 ~\tkone)^T ,\\
\nabla\cdot \ukone&=0 ,\\
\uk\cdot \nabla \tkone-\kappa \Delta \tkone&=g,
\end{cases}
\end{equation}
together with \eqref{BoussBC} for $\ukone$ and $\tkone$.
Note that the system equations \eqref{eq:Picard} are decoupled because the velocity term in the heat transport equation is known.  This makes each iteration of Picard efficient since one can first solve for $\tkone$ and then solve an Oseen Problem for $\ukone$ and $p^{k+1}$. Picard admits stable solutions for any problem data and produces unique solutions provided the problem data satisfies a smallness condition on the problem data (see Section \ref{Prelim}). However, this iteration converges linearly for sufficiently small problem data (see Section \ref{Prelim}) but the linear convergence rate decreases as $Ra$ increases and fails for even moderate $Ra$ \cite{transport,NC1}. \\

The purpose of this paper is to improve Picard for the Boussinesq equations in the setting of where partial solution data is available, by incorporating continuous data assimilation (CDA) style nudging into the Picard iteration (a method we will call CDA-Picard). Partial solution data may be available from physical experiments, measurements of physical phenomena, or even from a high resolution simulation where passing all solution data is too expensive. Note that the collected partial solution data could include noise, such as that which occurs from physical measurements. \\

CDA ideas have recently been found successful for solving steady Navier-Stokes equations (NSE) using Picard \cite{ALNR24}, which is perhaps surprising since CDA is designed for continuous in time assimilation. We consider herein a natural extension of this idea to multiphysics problems. Analysis of this type of CDA applied to Boussinesq poses significant extra difficulties and considerations compared to NSE. This is because the Boussinesq equations are given by NSE coupled to a heat transport equation, which includes extra nonlinear terms. Furthermore, unlike NSE, Boussinesq has the unknown variable $T$ as well as $u$ thus CDA nudging is applicable to multiple variables. Hence, there are extra cases to consider for CDA applied to the Boussinesq equations: nudging both $u$ and $T$, nudging just $u$, and nudging just $T$ (spoiler: it is enough to nudge just $u$, but better to nudge both). \\

CDA is performed using an interpolant $I_H$ where $H$ is representative of the spacing of collected data and $I_H(u)$ and $I_H(T)$ are interpolants of the partial solution data projected in the solution space. Thus, CDA-Picard for the Boussinesq equations takes the form (together with \eqref{BoussBC})
\begin{equation}\label{CDA}
\begin{cases}
\uk\cdot \nabla \ukone+\nabla p^{k+1} -\nu\Delta \ukone + \mu_1I_H(\ukone-u)&= f +R_i  (0 ~\tkone)^T, \\
\nabla\cdot \ukone&=0 ,\\
\uk\cdot \nabla \tkone-\kappa \Delta \tkone+\mu_2I_H(\tkone-T)&=g ,
\end{cases}
\end{equation}
where $\mu_1>0$ and $\mu_2>0$ are user chosen nudging constants. For simplicity we restrict \eqref{CDA} to the case where $I_H(u)$ and $I_H(T)$ are data collected at the same locations. Because $I_H(u)$ and $I_H(T)$ may come from collected data it is a natural assumption that the data includes noise (inaccuracies). This will inevitably affect the convergence and accuracy of CDA-Picard and therefore we investigate this case as well.\\

CDA was first proposed by Azouani, Olson, and Titi in 2014 \cite{AOT14} for time dependent systems and has since been applied to a wide variety of time dependent problems including NSE and turbulence \cite{AOT14,DMB20,FLT19,CGJP22}, the Cahn-Hilliard equation \cite{DR22}, planetary geostrophic modeling \cite{FLT16}, Benard convection \cite{FJT15}, and many others. Interest in CDA has increased in the last decade leading to many improvements to the algorithm and uses for it, such as for sensitivity analyses \cite{CL21}, parameter recovery \cite{CHL20,CH22}, numerical methods and analyses \cite{IMT20,LRZ19,RZ21,DR22,GNT18,JP23}. Of recent interest is modification to the algorithm such as adaptive nudging \cite{LF2025} and direct enforcement \cite{RebXue25}. There have also been several works investigating the use of the algorithm with restrictions on the observational data, including measurement error \cite{BOT15, CO23,FMT16} and partial observations of the problem variables \cite{FJT15, FLT16_ab, FLT16_ben, FLT16_cha}.

We prove herein that in the absence of noise in the partial solution data, CDA-Picard has a convergence rate that is improved by a factor of $H^{1/2}$ compared to Picard, when nudging both $u$ and $T$. We also prove that when nudging just $u$ there is a similar improvement, but not when nudging just $T$. We also prove that CDA-Picard using noisy solution data similarly scales the convergence rate of the residual by $H^{1/2}$ and concurrently the limit solution accuracy is bounded by the accuracy of the solution data. We find the limit solution of CDA-Picard with noisy data is typically within the convergence basin of Newton, meaning that a modified method incorporating Newton can overcome the accuracy limitations caused by the noisy data.\\

This paper is arranged as follows. First we present notation, and then stability and convergence results for Picard in Section \ref{Prelim}. These Picard results are known, and will be important for comparison. Then we give analysis in Section \ref{Analysis} for CDA-Picard when nudging both $u$ and $T$, just $u$, and just $T$. Section \ref{noisyAnalysis} considers these same questions as Section \ref{Analysis} but with noisy data. In Section \ref{Numerical} we use CDA-Picard to solve a benchmark problem both with and without noise, and propose a modification of CDA-Picard with noisy data.

\section{Preliminaries}\label{Prelim}

Let $\Omega\subset\R^d$ ($d=2\text{ or }3$) be a connected domain with boundary $\partial\Omega=\Gamma_1\cup\Gamma_2$ satisfying $meas(\Gamma_1\cap\Gamma_2)=0$.  Let $(\cdot,\cdot)$ and $\|\cdot \|$ denote the $L^2(\Omega)$ inner product and $L^2(\Omega)$ norm, respectively. We define the pressure, temperature, velocity and divergence free velocity solution spaces as 
\begin{align*}
Q&=\left\{q\in L^2(\Omega): \int_\Omega q ~d\Omega=0\right\},\\
 D&=\left\{S\in H^1(\Omega): S|_{\Gamma_1}=0\right\},\\
X&=\left\{v\in H^1:  v|_{\partial\Omega}=0 \right\},\\
V&=\left\{v\in X: (\nabla\cdot v,q)=0 ~\forall q\in Q\right\}.
\end{align*}

Let $X^\ast$, $V^\ast$, and $D^\ast$ represent the dual spaces of $X$, $V$, and $D$, respectively. We use $(\cdot,\cdot)$ to also denote the dual pairings of these spaces.
  For $z\in X, ~V, \text{ or } D$,  it is known that the Poincar\'e inequality holds:
  \begin{equation*}
  \|z\| \leq C_p \|\nabla z\|,
  \end{equation*}
  where $C_p>0$ is a constant depending only on $\Omega$.\\
  
 Let $b:X\times X\times X\rightarrow \R$ and $\hat{b}: X\times D\times D\rightarrow \R$ denote the trilinear functionals given by:
 \begin{align*}
 b(u,v,w)&:=(u\cdot \nabla v,w)+\frac{1}{2}((\nabla\cdot u)v,w),\\
 \hat{b}(u,v,w)&:=(u\cdot \nabla s,t)+\frac{1}{2}((\nabla\cdot u)s,t).
 \end{align*}
 These trilinear functionals defined above are skew-symmetric,
 $$b(u,v,v)= 0 \text{ and } \hat{b}(u,s,t)=0.$$
 We will bound $b$ and $\hat{b}$ herein using the well known bounds \cite{Laytonbook}:
 $\forall u,v,w \in X$, $s,t\in D$, $\exists C_s>0$ depending only on $|\Omega|$ such that
\begin{align}\label{bbound2}
|b(u,v,w)|&\leq C_s \|\nabla u\| \|\nabla v\| \|w\|^{1/2} \|\nabla w\|^{1/2},\\
|\hat{b}(u,s,t)|&\leq C_s \|\nabla u\| \|\nabla s\| \|t\|^{1/2}\|\nabla t\|^{1/2},
\end{align}
and 
\begin{align}\label{bbound}
|b(u,v,w)|&\leq C_p^{1/2} C_s \|\nabla u\| \|\nabla v\| \|\nabla w\|,\\
 |\hat{b}(u,s,t)|&\leq C_p^{1/2} C_s \|\nabla u\| \|\nabla s\| \|\nabla t\|.
\end{align}

\subsection{Boussinesq preliminaries}
The system \eqref{Bouss}-\eqref{BoussBC} is known to admit stable solutions for any $Ra>0$ \cite{Layton89}, which can be proven similarly to the analogous result for the steady NSE by using the Leray-Schauder theorem. However, uniqueness for \eqref{BoussWF} requires a smallness condition on the data. These results are provided below and we give the proofs for the following results in the appendix for completeness.\\

The Galerkin weak formulation of the Boussinesq equations takes the form: Given $f\in X^\ast$ and $g\in D^\ast$, find $(u,p,T)\in X\times Q\times D$ satisfying $\forall (v,q,w)\in (X\times Q\times D)$
\begin{equation*}
\begin{cases}
b(u , u ,v)-( p,\nabla\cdot v) +\nu(\nabla u,\nabla v) &= (f,v) +(R_i (0~T)^T,v),\\
(\nabla\cdot u,q)&=0,\\
\hat{b}(u , T,w)+\kappa (\nabla T,\nabla w)&=(g,w).
\end{cases}
\end{equation*}
Note that the perfect insulation condition $\nabla T\cdot n|_{\Gamma_2}$ is weakly enforced. The spaces $X, Q, \text{ and }D$ satisfy an LBB condition. Therefore the problem can be reformulated using $V$ as:  Given $f\in V^\ast$ and $g\in D^\ast$, find  $(u,T)\in V\times D$ satisfying for all $(v,w)\in V\times D$
\begin{equation}\label{BoussWF}
\begin{cases}
b(u , u ,v)+\nu(\nabla u,\nabla v) &= (f,v) +(R_i (0~T)^T,v),\\
\hat{b}(u , T,w)+\kappa (\nabla T,\nabla w)&=(g,w).
\end{cases}
\end{equation}

\begin{lem}\label{BoussStability}
Any solution to the Boussinesq equations \eqref{BoussWF} satisfies the a priori estimate
\begin{align}
\|\nabla T\|&\leq \|\nabla T\| \leq \kappa^{-1}\|g\|_{D^\ast}=: M_2, \label{TBound}\\
\|\nabla u\|&\leq \|\nabla u\| \leq \nu^{-1}\| f\|_{V^\ast} +R_i C_p^2\nu^{-1} M_2=: M_1\label{uBound}.
\end{align}
\end{lem}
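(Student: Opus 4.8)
The plan is to derive both bounds by the standard energy method, testing each equation in \eqref{BoussWF} against its own solution and exploiting the skew-symmetry of the trilinear forms to annihilate the nonlinear terms. The key structural feature I would use is that, although the temperature equation is coupled to $u$ through $\hat{b}(u,T,w)$, the choice $w=T$ removes that coupling entirely, so the temperature estimate can be obtained first and independently of $u$; the velocity estimate then treats the buoyancy term as a forcing controlled by the temperature bound already in hand.

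First I would establish \eqref{TBound}. Setting $w=T$ in the second equation of \eqref{BoussWF} and using $\hat{b}(u,T,T)=0$ gives $\kappa\|\nabla T\|^2=(g,T)$. Bounding the right-hand side by the dual pairing, $(g,T)\le \|g\|_{D^\ast}\|\nabla T\|$, and dividing by $\|\nabla T\|$ (the estimate being trivial if $\nabla T=0$) yields $\|\nabla T\|\le \kappa^{-1}\|g\|_{D^\ast}=M_2$. Next, for \eqref{uBound} I would set $v=u$ in the first equation and use $b(u,u,u)=0$ to obtain $\nu\|\nabla u\|^2=(f,u)+R_i((0~T)^T,u)$. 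The forcing term is handled by $(f,u)\le\|f\|_{V^\ast}\|\nabla u\|$, and for the buoyancy coupling I would write $|R_i((0~T)^T,u)|\le R_i\|T\|\,\|u\|$ and apply the Poincar\'e inequality twice, $\|T\|\le C_p\|\nabla T\|$ and $\|u\|\le C_p\|\nabla u\|$, to get $|R_i((0~T)^T,u)|\le R_i C_p^2\|\nabla T\|\,\|\nabla u\|$. Combining these, dividing by $\|\nabla u\|$, and inserting the already-proven bound $\|\nabla T\|\le M_2$ produces $\|\nabla u\|\le \nu^{-1}\|f\|_{V^\ast}+R_i C_p^2\nu^{-1}M_2=M_1$.

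I do not anticipate a genuine obstacle here, since the argument is a routine a priori estimate; the only point requiring care is the buoyancy term $R_i((0~T)^T,u)$, which is the sole source of coupling in the velocity estimate and which must be bounded so that the temperature bound $M_2$ feeds cleanly into $M_1$ with exactly the constant $R_i C_p^2\nu^{-1}$. Using two Poincar\'e inequalities (rather than, say, one factor of $C_p$ and an $L^2$--$L^2$ pairing absorbed differently) is what produces the stated $C_p^2$ dependence, so I would match the splitting to the constant appearing in the lemma.
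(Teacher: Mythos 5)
Your proposal is correct and follows essentially the same route as the paper's appendix proof: test with $v=u$, $w=T$, use skew-symmetry of $b$ and $\hat b$ to kill the nonlinear terms, bound $(f,u)$, $(g,T)$ by dual norms and the buoyancy term $R_i((0~T)^T,u)\leq R_iC_p^2\|\nabla T\|\,\|\nabla u\|$ via Cauchy--Schwarz and two Poincar\'e inequalities, then feed the temperature bound $M_2$ into the velocity estimate. The only cosmetic difference is that you establish \eqref{TBound} fully before \eqref{uBound}, whereas the paper derives both intermediate inequalities and then substitutes; the estimates and constants are identical.
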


\begin{lem}\label{lem:BoussUnique}
Let $\alpha_1=C_s \nu^{-1} M_1$ and $\alpha_2=C_s \kappa^{-1} M_2$. If $C_p^2 \nu^{-1}R_i , ~C_p^{1/2}(\alpha_2 + \alpha_1)<1$, then the solutions to \eqref{BoussWF} are unique.
\end{lem}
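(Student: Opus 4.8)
The plan is the standard energy-contradiction argument adapted to the coupled system. I would suppose that $(u_1,T_1)$ and $(u_2,T_2)$ both solve \eqref{BoussWF} and set $e_u=u_1-u_2$, $e_T=T_1-T_2$. Subtracting the momentum equations and the temperature equations, I would exploit linearity in the first two slots of $b$ and $\hat b$ to rewrite the nonlinear differences as
\begin{align*}
b(u_1,u_1,v)-b(u_2,u_2,v)&=b(e_u,u_1,v)+b(u_2,e_u,v),\\
\hat b(u_1,T_1,w)-\hat b(u_2,T_2,w)&=\hat b(e_u,T_1,w)+\hat b(u_2,e_T,w).
\end{align*}
This produces difference equations for $(e_u,e_T)$ whose only inter-equation coupling is the buoyancy term $(R_i(0~e_T)^T,v)$ appearing in the momentum residual.

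Next I would test the momentum difference equation with $v=e_u$ and the temperature difference equation with $w=e_T$. Skew-symmetry kills $b(u_2,e_u,e_u)$ and $\hat b(u_2,e_T,e_T)$, leaving $\nu\|\nabla e_u\|^2=-b(e_u,u_1,e_u)+R_i((0~e_T)^T,e_u)$ and $\kappa\|\nabla e_T\|^2=-\hat b(e_u,T_1,e_T)$. For the surviving trilinear terms I apply \eqref{bbound} together with the a priori bounds $\|\nabla u_1\|\le M_1$ and $\|\nabla T_1\|\le M_2$ from Lemma \ref{BoussStability}, using $C_sM_1=\alpha_1\nu$ and $C_sM_2=\alpha_2\kappa$, to obtain $|b(e_u,u_1,e_u)|\le C_p^{1/2}\alpha_1\nu\|\nabla e_u\|^2$ and $|\hat b(e_u,T_1,e_T)|\le C_p^{1/2}\alpha_2\kappa\|\nabla e_u\|\|\nabla e_T\|$. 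The buoyancy term is handled by Cauchy--Schwarz and Poincar\'e as $|R_i((0~e_T)^T,e_u)|\le R_iC_p^2\|\nabla e_T\|\|\nabla e_u\|$. Collecting terms gives the two estimates
\begin{align*}
\nu(1-C_p^{1/2}\alpha_1)\|\nabla e_u\|^2 &\le R_iC_p^2\,\|\nabla e_T\|\,\|\nabla e_u\|,\\
\kappa\|\nabla e_T\|^2 &\le C_p^{1/2}\alpha_2\kappa\,\|\nabla e_u\|\,\|\nabla e_T\|.
\end{align*}

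From the second estimate I read off $\|\nabla e_T\|\le C_p^{1/2}\alpha_2\|\nabla e_u\|$ (trivially true if $\|\nabla e_T\|=0$), and substituting this into the first, then dividing by $\nu$, yields
$$\|\nabla e_u\|^2\Big(1-C_p^{1/2}\alpha_1-C_p^2\nu^{-1}R_i\,C_p^{1/2}\alpha_2\Big)\le 0.$$
Here is where both hypotheses enter together: because $C_p^2\nu^{-1}R_i<1$, the bracketed constant is bounded below by $1-C_p^{1/2}\alpha_1-C_p^{1/2}\alpha_2=1-C_p^{1/2}(\alpha_1+\alpha_2)>0$, so it is strictly positive. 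This forces $\|\nabla e_u\|=0$, hence $e_u=0$ by Poincar\'e, and then the temperature estimate gives $\|\nabla e_T\|=0$, so $e_T=0$ and the two solutions coincide.

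The main delicacy will be the coupling: the momentum estimate cannot be closed on its own because of the buoyancy term, so the argument must chain the temperature bound into the velocity bound and invoke \emph{both} smallness conditions at once—the role of $C_p^2\nu^{-1}R_i<1$ is precisely to let the composite buoyancy-thermal constant be majorized by $C_p^{1/2}(\alpha_1+\alpha_2)$. The only routine bookkeeping is the bilinear splitting, confirming which skew-symmetric terms vanish, and tracking the $\nu^{-1},\kappa^{-1}$ factors that convert $C_sM_1,C_sM_2$ into $\alpha_1\nu,\alpha_2\kappa$.
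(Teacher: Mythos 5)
Your proposal is correct and follows essentially the same argument as the paper: subtract the two weak solutions, split the trilinear differences by bilinearity, test with $(e_u,e_T)$, kill two terms by skew-symmetry, and bound the survivors via \eqref{bbound}, Poincar\'e, and the a priori estimates of Lemma \ref{BoussStability}. The only cosmetic differences are that you retain $u_1,T_1$ rather than $u_2,T_2$ in the surviving trilinear terms (immaterial, since both satisfy the same bounds) and close by substituting the temperature estimate into the momentum estimate instead of adding the two inequalities as the paper does --- both routes invoke $C_p^2\nu^{-1}R_i<1$ and $C_p^{1/2}(\alpha_1+\alpha_2)<1$ to force $e_u=e_T=0$.
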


\subsection{The Picard iteration for the Boussinesq equations}\label{PicardSection}
We provide results for Picard applied to the Boussinesq equation \eqref{eq:Picard} and include the proofs in the appendix for completeness. These results are similar to those found in \cite{PNBouss,Laytonbook}.\\

 The weak formulations for Picard for the Boussinesq system takes the form: Find $(\ukone,\tkone)\in V\times D$ satisfying $
\forall (v,w)\in V\times D$,
\begin{equation}\label{PicardWF}
\begin{cases}
b(\uk, \ukone,v)  +\nu(\nabla \ukone,\nabla v) &= (f,v) +R_i ((0 ~\tkone)^T,v),\\
\hat{b}(\uk, \tkone,w)+\kappa (\nabla \tkone,\nabla w)&=(g,w).
\end{cases}
\end{equation}
Note that the Picard iteration decouples the temperature equation and thus solving \eqref{PicardWF} is a two-step process where one first solves a scalar convection-diffusion problem and then an Oseen problem. Effective preconditioners for these linear systems exist in the literature \cite{benzi,elman:silvester:wathen,HR13,BB12}.

\begin{lem}\label{Lemma:PicardBound}
Any solution to the Picard iteration for the Boussinesq equations satisfies the a priori estimate: for any $k=1,2...$,
\begin{align*}
 \|\nabla T^k\|\leq M_2,\\
\|\nabla u^k\| \leq M_1,
\end{align*}
\end{lem}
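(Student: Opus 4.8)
The plan is to derive both bounds by a standard energy argument applied to the weak Picard system \eqref{PicardWF}, testing each equation against its own unknown and exploiting the skew-symmetry of the trilinear forms. The crucial structural feature is that the previous iterate $\uk$ enters both equations only through the convective (first) slot of $b$ and $\hat{b}$; consequently, when we test with the current unknowns, those convective terms vanish identically regardless of $\uk$. This decouples the estimate from the history of the iteration, so that no induction on $k$ is needed and the bounds hold uniformly for every computed iterate and every initial guess $u^0$.

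First I would bound the temperature. Setting $w=\tkone$ in the heat equation of \eqref{PicardWF} and using $\hat{b}(\uk,\tkone,\tkone)=0$ leaves $\kappa\|\nabla\tkone\|^2=(g,\tkone)$. Estimating the right-hand side by the definition of the dual norm, $(g,\tkone)\leq \|g\|_{D^\ast}\|\nabla\tkone\|$, and dividing through by $\|\nabla\tkone\|$ yields $\|\nabla\tkone\|\leq \kappa^{-1}\|g\|_{D^\ast}=M_2$. Since this computation never references $\uk$, the bound on $\|\nabla T^k\|$ holds for every $k$.

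Next I would bound the velocity, using the temperature bound just obtained. Setting $v=\ukone$ in the momentum equation and using $b(\uk,\ukone,\ukone)=0$ gives $\nu\|\nabla\ukone\|^2=(f,\ukone)+R_i((0~\tkone)^T,\ukone)$. I would bound the forcing term by $\|f\|_{V^\ast}\|\nabla\ukone\|$ and the buoyancy coupling term by Cauchy--Schwarz followed by the Poincar\'e inequality applied to both factors, $R_i((0~\tkone)^T,\ukone)\leq R_i\|\tkone\|\|\ukone\|\leq R_i C_p^2\|\nabla\tkone\|\|\nabla\ukone\|$. Inserting $\|\nabla\tkone\|\leq M_2$ and dividing by $\|\nabla\ukone\|$ gives $\nu\|\nabla\ukone\|\leq \|f\|_{V^\ast}+R_i C_p^2 M_2$, which rearranges to $\|\nabla\ukone\|\leq \nu^{-1}\|f\|_{V^\ast}+R_i C_p^2\nu^{-1}M_2=M_1$.

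There is no serious obstacle here: the argument mirrors the a priori estimate for the continuous problem in Lemma \ref{BoussStability}, the only real care needed being to treat the equations in the correct order (temperature first, since its bound feeds the velocity estimate) and to apply Poincar\'e to both factors of the buoyancy term. The one point worth stating explicitly is that skew-symmetry is precisely what removes any dependence on $\uk$, which is why the Picard iterates remain uniformly bounded even in regimes where the iteration fails to converge.
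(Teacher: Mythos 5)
Your proof is correct and takes essentially the same route as the paper: the paper proves this lemma by noting it is ``proved analogously to Lemma \ref{BoussStability},'' and your energy argument (testing with $\tkone$ and $\ukone$, using skew-symmetry of $\hat{b}$ and $b$ to eliminate the convective terms involving $\uk$, bounding the buoyancy term via Cauchy--Schwarz and Poincar\'e, and estimating temperature first so $M_2$ feeds into $M_1$) is precisely that analogous argument, with the uniformity in $k$ correctly attributed to skew-symmetry.
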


\begin{lem}
The Picard iteration \eqref{PicardWF} with data satisfying $\min\left\{1-\nu^{-1}\frac{C_p^2 R_i}{2} ,1-\kappa^{-1}\frac{C_p^2 R_i}{2} \right\}>0$, admits a unique solution. 
\end{lem}

\begin{lem}\label{Lemma:PicardConv}
Consider the Picard iteration \eqref{PicardWF} with data satisfying $C_p^2\nu^{-1}R_i<1$ and $C_p^{1/2}(\alpha_1+\alpha_2) <1$. Then the iteration converges linearly with rate $C_p^{1/2}(\alpha_1+\alpha_2)$. In particular we have
\begin{equation*}
 \|\nabla (T-T^{k+1}) \| \leq  C_p^{1/2}\alpha_2 \|\nabla(u-u^k)\|,
\end{equation*}
and
\begin{equation*}
\|\nabla (u-u^{k+1})\| \leq C_p^{1/2}(\alpha_1+ \alpha_2) \|\nabla(u-u^k)\|.
\end{equation*}
\end{lem}

\begin{rem}
Recall that Lemma \ref{lem:BoussUnique} gives the sufficient condition for uniqueness of Boussinesq solutions $C_p^2 \nu^{-1}R_i, C_p^{1/2}(\alpha_1+\alpha_2)<1$. This means that the conditions for convergence of Picard and the uniqueness of solutions to the Boussinesq equations are the same.
\end{rem}

\section{Convergence of CDA-Picard for Boussinesq equations with (accurate) partial solution data}\label{Analysis}
We now provide analytical results for CDA-Picard that uses accurate partial solution data. We begin by providing results for CDA-Picard when nudging both $u$ and $T$ and for nudging only $u$ or $T$. To prove convergence, we will use the weighted $H^1$ norm 
$$
\|v\|_{\ast}:=\sqrt{\frac{1}{ 4 C_I^2 H^2}\|v\|^2 + \|\nabla v\|^2}.
$$
This weighted $H^1$ norm naturally arises in the CDA-Picard analysis. For small $H$, the $\ast$-norm is essentially equivalent to the $L^2(\Omega)$ norm. Following \cite{ALNR24}, a modified analysis with additional assumptions could allow for $L^2$ norm estimates instead.

Let $\tau_H(\Omega)$ denote a coarse mesh on $\Omega$ with maximum element length $H$, and let $\mathbb{P}_k(\tau_H)$ denote the degree $k$ continuous Lagrange finite element defined on $\tau_H$. Then we define the finite element spaces $X_H=P_k(\tau_H)\cap X$ and $D_H = P_k(\tau_H)\cap D$. We define the interpolation operator $I_H$  on $\tau_H(\Omega)$. We assume that $I_H$ satisfies: $\exists$ a constant $C_I$ independent of $H$ satisfying
\begin{align}
\|I_Hv-v\|\leq C_I H\|\nabla v\| \hspace{5pt} &\forall v\in X \text{ and } D,\\
\|I_Hv\|\leq C_I\| v\| \hspace{5pt} &\forall v\in X \text{ and } \forall v\in X \text{ and } D.
\end{align}
In our numerical tests, we take $I_H$ to be the $L^2$ projection onto $X_H=P_0(\tau_H)$.\\

We have the following bounds on the $L^2(\Omega)$ norm.
\begin{lem}\label{starbound}
For any $v\in X,D$ all of the following hold:
\begin{align*}
\|v\|^{1/2}\|\nabla v\|^{1/2}& \leq (C_I H)^{1/2}\|v\|_{\ast},\\
 \|v\|& \leq C_p \|v\|_{\ast},\\
  \|v\|&\leq C_p^{1/2}(C_I H)^{1/2} \|v\|_{\ast}.
\end{align*}
\end{lem}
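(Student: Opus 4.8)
The plan is to prove all three bounds directly from the definition of the $\ast$-norm together with Young's inequality and the Poincar\'e inequality; each reduces to an elementary estimate once the right algebraic identity is exposed. First I would record the two immediate consequences of the definition $\|v\|_{\ast}^2 = \frac{1}{4 C_I^2 H^2}\|v\|^2 + \|\nabla v\|^2$, namely $\|\nabla v\| \le \|v\|_{\ast}$ and $\frac{1}{2 C_I H}\|v\| \le \|v\|_{\ast}$, each obtained by discarding one of the two nonnegative terms inside the square root.

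For the first inequality I would square both sides, reducing the claim to
\[
\|v\|\,\|\nabla v\| \le C_I H\,\|v\|_{\ast}^2 = \frac{1}{4 C_I H}\|v\|^2 + C_I H\,\|\nabla v\|^2,
\]
where the last equality is just the definition of $\|v\|_{\ast}^2$ multiplied through by $C_I H$. This is exactly Young's inequality $ab \le \frac{1}{4\delta}a^2 + \delta b^2$ applied with $a = \|v\|$, $b = \|\nabla v\|$, and the parameter choice $\delta = C_I H$. The only point requiring any care is that the factor $\tfrac14$ appearing in the definition of the $\ast$-norm is precisely the constant produced by Young's inequality, so the bound holds with no slack; in this sense the weighted norm is engineered so that this estimate is sharp.

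The remaining two bounds then follow with no further work. For the second, I would chain the Poincar\'e inequality $\|v\| \le C_p\|\nabla v\|$ with the observation $\|\nabla v\| \le \|v\|_{\ast}$ noted above. For the third, I would write Poincar\'e in the form $\|v\|^2 \le C_p\|v\|\,\|\nabla v\|$ and insert the (squared) first inequality, giving $\|v\|^2 \le C_p\,C_I H\,\|v\|_{\ast}^2$, so that taking square roots yields the claim. There is no substantive obstacle here: the entire content lies in matching the Young's-inequality constant to the weighting of the $\ast$-norm in the first estimate, after which the other two are immediate corollaries.
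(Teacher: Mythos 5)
Your proposal is correct and matches the paper's own argument: the first bound is exactly the paper's application of Young's inequality with the parameter matched to the weight $\frac{1}{4C_I^2H^2}$ in the $\ast$-norm, and the second and third bounds follow by chaining the Poincar\'e inequality with $\|\nabla v\|\leq \|v\|_{\ast}$ and with the first estimate, respectively, just as in the paper. (If anything, your write-up is slightly cleaner, since the paper's displayed step states an equality where Young's inequality only gives $\leq$.)
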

\begin{proof}
Using Young's inequality and the definition of the $\ast$-norm we obtain
\begin{equation}\label{eq:SB1}
\|v\|^{1/2}\|\nabla v\|^{1/2}= \left(\frac{1}{4C_I H} \|v\|^2 + C_I H\|\nabla v\|^2\right)^{1/2} \leq (C_I H)^{1/2}\|v\|_{\ast}.
\end{equation}
Using \eqref{eq:SB1} and the Poincare inequality we get
\begin{align*}
 \|v\|&\leq C_p\|\nabla v\| \leq C_p \|v\|_{\ast},\\
  \|v\|&= C_p^{1/2}\|v\|^{1/2}\|\nabla v\|^{1/2}\leq C_p^{1/2}(C_I H)^{1/2} \|v\|_{\ast}.
\end{align*}
\end{proof}

We first assume that exact solution data  is known at some set of points in $\Omega$.
Then the weak formulation of CDA-Picard for the Boussinesq equations \eqref{CDA} is given by: Given $f\in X^\ast$ and $g^\ast\in D^\ast$, find $(u,T)\in X\times D$ satisfying
\begin{equation}\label{CDAWF}
\begin{cases}
b(\uk, \ukone ,v)+\nu(\nabla \ukone,\nabla v) +\mu_1(I_H(\ukone-u),I_H(v)) &= (f,v) +(R_i (0~\tkone)^T,v)\\
\hat{b}(\uk, \tkone,w)+\kappa (\nabla \tkone,\nabla w)+\mu_2(I_H(\tkone-T),I_H(w))&=(g,w).
\end{cases}
\end{equation}
Note that the term $\mu_1(I_H(\ukone-u),I_H(v))$ does not follow the usual rules for the Galerkin weak formulation and may be considered a variational crime, if $I_H$ is not the $L^2$ projection onto $X_H$.

\subsection{Convergence of CDA-Picard nudging both $u$ and $T$}

 We begin analyzing CDA-Picard \eqref{CDAWF} without noise when nudging both $u$ and $T$, and then consider the case of nudging just $u$ and just $T$ which use similar techniques.

\begin{theorem}\label{thm:nonoise}
Let $\mu_1\geq \frac{\nu}{4C_I^2 H^2}$, $\mu_2\geq \frac{\kappa}{4C_I^2H^2}$, $R_i \nu^{-1}C_p^2<1$, $C_p^{-1/2} \frac{4}{3}(C_I H)^{1/2}<1$ and $\frac{4}{3}\sqrt{C_IH}(\alpha_1+\alpha_2)<1$. Then CDA-Picard converges linearly with rate at least $ \frac{4}{3}\sqrt{C_IH}(\alpha_1+\alpha_2)$:
$$
\|u-u^{k+1}\|_\ast+(1- \frac{4}{3}R_i \nu^{-1}C_p^2 )\| T-T^{k+1}\|_{\ast}\leq \frac{4}{3}(\alpha_1+\alpha_2)(C_I H)^{1/2}\|u-u^k\|_\ast . 
$$
\end{theorem}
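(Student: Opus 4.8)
The plan is to run a standard error-equation argument adapted to the $\ast$-norm: subtract the two weak forms, test against the errors, kill the worst nonlinear terms by skew-symmetry, extract coercivity of the left-hand sides from the nudging lower bounds on $\mu_1,\mu_2$, and combine the resulting two scalar inequalities. Write $e_u^{k}:=u-u^{k}$ and $e_T^{k}:=T-T^{k}$. Since the true solution trivially satisfies the nudging terms ($I_H(u-u)=0$), I would subtract \eqref{CDAWF} from \eqref{BoussWF}. Writing $b(u,u,v)-b(u^k,u^{k+1},v)=b(e_u^k,u,v)+b(u^k,e_u^{k+1},v)$ and likewise for $\hat b$, then testing the momentum equation with $v=e_u^{k+1}$ and the temperature equation with $w=e_T^{k+1}$, skew-symmetry removes $b(u^k,e_u^{k+1},e_u^{k+1})$ and $\hat b(u^k,e_T^{k+1},e_T^{k+1})$, leaving
\begin{align*}
\nu\|\nabla e_u^{k+1}\|^2+\mu_1\|I_H e_u^{k+1}\|^2 &= R_i((0~e_T^{k+1})^T,e_u^{k+1})-b(e_u^k,u,e_u^{k+1}),\\
\kappa\|\nabla e_T^{k+1}\|^2+\mu_2\|I_H e_T^{k+1}\|^2 &= -\hat b(e_u^k,T,e_T^{k+1}).
\end{align*}
Notice that only the previous velocity error $e_u^k$ feeds the right-hand sides, which is what makes the argument close.

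Next I would estimate the two sides separately. For the right-hand sides I would bound the transport terms with \eqref{bbound2}, replace $\|\nabla u\|,\|\nabla T\|$ by the a priori bounds $M_1,M_2$ from Lemma \ref{BoussStability}, and convert $\|e^{k+1}\|^{1/2}\|\nabla e^{k+1}\|^{1/2}\le (C_IH)^{1/2}\|e^{k+1}\|_{\ast}$ via Lemma \ref{starbound}; this is precisely where the $H^{1/2}$ gain appears, and it produces $\alpha_1=C_s\nu^{-1}M_1$ and $\alpha_2=C_s\kappa^{-1}M_2$. The coupling term I would bound by $R_i\|e_T^{k+1}\|\,\|e_u^{k+1}\|$ and then apply $\|\cdot\|\le C_p\|\cdot\|_{\ast}$ from Lemma \ref{starbound}, giving the factor $R_iC_p^2$. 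The delicate part is the left-hand side: using $\|v-I_Hv\|\le C_IH\|\nabla v\|$ together with $\mu_1\ge \nu/(4C_I^2H^2)$, I must show that $\nu\|\nabla v\|^2+\mu_1\|I_Hv\|^2$ controls a fixed multiple of $\|v\|_{\ast}^2=\tfrac{1}{4C_I^2H^2}\|v\|^2+\|\nabla v\|^2$ (and the analogue with $\kappa,\mu_2$). \emph{This coercivity estimate is the main obstacle}, since the interpolation defect couples the $L^2$ and gradient parts of the $\ast$-norm, and it is the careful bookkeeping here — balancing the gradient term against the nudged $L^2$ term — that pins down the precise constant $\tfrac{4}{3}$ that ends up multiplying the right-hand side after one divides through.

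Finally I would assemble the pieces. Dividing the momentum inequality by $\|e_u^{k+1}\|_{\ast}$ and the temperature inequality by $\|e_T^{k+1}\|_{\ast}$, applying the coercivity lower bounds, and using $\|\nabla e_u^k\|\le\|e_u^k\|_{\ast}$, I expect
\begin{align*}
\|e_u^{k+1}\|_{\ast} &\le \tfrac{4}{3}R_i\nu^{-1}C_p^2\,\|e_T^{k+1}\|_{\ast}+\tfrac{4}{3}\alpha_1(C_IH)^{1/2}\|e_u^k\|_{\ast},\\
\|e_T^{k+1}\|_{\ast} &\le \tfrac{4}{3}\alpha_2(C_IH)^{1/2}\|e_u^k\|_{\ast}.
\end{align*}
Adding these and moving the coupling contribution to the left yields exactly the stated inequality, with the coefficient $(1-\tfrac{4}{3}R_i\nu^{-1}C_p^2)$ on $\|e_T^{k+1}\|_{\ast}$. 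The smallness hypotheses are arranged so that this coefficient stays positive; hence the left side dominates $\|e_u^{k+1}\|_{\ast}$, and the assumption $\tfrac{4}{3}\sqrt{C_IH}(\alpha_1+\alpha_2)<1$ gives a genuine contraction of the velocity error in the $\ast$-norm. Linear convergence of both $e_u^{k}$ and $e_T^{k}$ at the claimed rate then follows, with the remaining conditions ($R_i\nu^{-1}C_p^2<1$ and $C_p^{-1/2}\tfrac{4}{3}(C_IH)^{1/2}<1$) guaranteeing the constants produced by the coupling and interpolation steps behave as required.
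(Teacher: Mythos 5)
Your proposal follows essentially the same route as the paper's proof: the same error equation and skew-symmetry cancellations, the same use of \eqref{bbound2} with Lemmas \ref{BoussStability} and \ref{starbound} to produce $\alpha_1,\alpha_2$ and the $(C_IH)^{1/2}$ gain, the same coercivity argument (splitting $\nu=\tfrac{\nu}{4}+\tfrac{3\nu}{4}$, using $\|v-I_Hv\|\leq C_IH\|\nabla v\|$ and the lower bounds on $\mu_1,\mu_2$ to get $\tfrac{3\nu}{4}\|\cdot\|_\ast^2$ and $\tfrac{3\kappa}{4}\|\cdot\|_\ast^2$, whence the constant $\tfrac{4}{3}$), and the same final summation. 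The only deviation is cosmetic: you bound the buoyancy coupling by $R_iC_p^2\|e_T^{k+1}\|_\ast\|e_u^{k+1}\|_\ast$ (using $\|\cdot\|\leq C_p\|\cdot\|_\ast$ twice), which reproduces the stated coefficient $(1-\tfrac{4}{3}R_i\nu^{-1}C_p^2)$ directly, whereas the paper uses the sharper $R_iC_p^{3/2}(C_IH)^{1/2}$ and then invokes the hypothesis $C_p^{-1/2}\tfrac{4}{3}(C_IH)^{1/2}<1$ to reconcile with the statement.
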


\begin{rem}
The condition $R_i \nu^{-1}C_p^2<1$ is the condition for well-posedness of the Boussinesq equations from Lemma \ref{lem:BoussUnique}. In the analysis, it appears together with the restriction $C_p^{-1}\frac{4}{3}\sqrt{C_I H}\leq 1$. The analysis can be changed slightly to give the restriction
$\frac{4}{3}R_i \nu^{-1}C_p^{3/2}<1$
or the restriction $R_i \nu^{-1}C_p^{3/2}<1\text{ and } \sqrt{C_I H}\leq \frac{3}{4}$.
\end{rem}

\begin{rem}
Without CDA, Lemma \ref{Lemma:PicardConv} shows a sufficient condition for Picard convergence is $C_p^{1/2}(\alpha_1+\alpha_2)<1$, and the convergence rate is $C_p^{1/2}(\alpha_1+\alpha_2)$. Theorem \ref{thm:nonoise} thus shows how CDA improves convergence rate and even enables convergence when Picard fails: the right hans sidde is scaled by $H^{1/2}$.
\end{rem}

\begin{proof}
Let $\ekone=u-u^{k+1}$ and $\etkone=T-T^{k+1}$. We subtract \eqref{BoussWF} from \eqref{CDAWF} and set $v=\ekone$ and $w=\etkone$. Then we add and subtract terms to get
$$\begin{cases}
b(\ek,\ekone,\ekone)+b(\ek, u,\ekone)+b(u,\ekone,\ekone)+\nu\|\nabla \ekone\|^2+\mu_1\|I_H(\ekone)\|^2 &= (R_i (0 ~\etkone)^T,\ekone),\\
\hat{b}(u,\etkone,\etkone)+\hat{b}(\ek, \etkone,\etkone)+\hat{b}(\ek , T,\etkone)+\kappa \|\nabla \etkone\|^2+\mu_2\|I_H(\etkone)\|^2&=0.
\end{cases}$$
The skew symmetry of $b$ and $\hat{b}$ vanishes four nonlinear terms yielding
$$\begin{cases}
\nu\|\nabla \ekone\|^2 +\mu_1\|I_H(\ekone)\|^2&= (R_i (0 ~\etkone)^T,\ekone)-b(\ek, u,\ekone),\\
\kappa \|\nabla \etkone\|^2+ \mu_2\|I_H(\etkone)\|^2&=-\hat{b}(\ek,  T,\etkone).
\end{cases}$$
We next use Lemma \ref{starbound}, \eqref{bbound2}, and Lemma \ref{BoussStability} to upper bound the right hand side as
\begin{equation}\label{CDA1}
\begin{cases}
\nu\|\nabla \ekone\|^2 +\mu_1\|I_{H}(\ekone)\|^2&\leq R_i C_p^{3/2} (C_IH)^{1/2}\| \etkone\|_\ast \|  \ekone\|_{\ast} +\alpha_1\nu  ( C_I H)^{1/2}\|\ek\|_\ast \|\ekone\|_\ast,\\
\kappa \|\nabla \etkone\|^2 + \mu_2\|I_{H}(\etkone)\|^2 &\leq \alpha_2\kappa  ( C_I H)^{1/2}\|\ek\|_\ast \|\etkone\|_\ast.
\end{cases}
\end{equation}

Next we lower bound the left hand sides using the interpolation bound and the triangle inequality via
$$\begin{cases}
\nu\|\nabla \ekone\|^2 +\mu_1\|I_{H}(\ekone)\|^2&= \frac{\nu}{4}\|\nabla \ekone\|^2 +\mu_1\|I_{H}(\ekone)\|^2 + \frac{3\nu}{4}\|\nabla \ekone\|^2,\\
&\geq  \frac{\nu}{4 C_I^2 H^2}\|\ekone -I_{H}(\ekone)\|^2 +\mu_1\|I_{H}(\ekone)\|^2 + \frac{3\nu}{4}\|\nabla \ekone\|^2,\\
&\geq  \frac{\nu}{4 C_I^2 H^2}(\|\ekone -I_{H}(\ekone)\|^2 +\|I_{H}(\ekone)\|^2) + \frac{3\nu}{4}\|\nabla \ekone\|^2,\\
&\geq \frac{\nu}{4 C_I^2 H^2}\|\ekone \|^2 + \frac{3\nu}{4}\|\nabla \ekone\|^2,\\
&\geq  \frac{3\nu}{4}\| \ekone\|^2_\ast,\\
\kappa \|\nabla \etkone\|^2 + \mu_2\|I_{H}(\etkone)\|^2&= \frac{\kappa}{4} \|\nabla \etkone\|^2 + \mu_2\|I_{H}(\etkone)\|^2 + \frac{3\kappa}{4} \|\nabla \etkone\|^2,\\
 &\geq \frac{\kappa}{4C_I^2H^2} \| \etkone-I_{H}(\etkone)\|^2 + \mu_2\|I_{H}(\etkone)\|^2 + \frac{3\kappa}{4} \|\nabla \etkone\|^2,\\
  &\geq \frac{\kappa}{4 C_I^2 H^2} \| \etkone\|^2 + \frac{3\kappa}{4} \|\nabla \etkone\|^2,\\
 &\geq \frac{3\kappa}{4} \| \etkone\|^2_\ast,\\
\end{cases}$$
 by assumptions on $\mu_i$ being sufficiently large. Using this in \eqref{CDA1} and simplifying yields 

$$\begin{cases}
\| \ekone\|_\ast &\leq  \frac{4}{3} \nu^{-1} R_i C_p^{3/2} (C_I H)^{1/2}\| \etkone\|_\ast+\frac{4}{3}\alpha_1C_p( C_I H)^{1/2}\|\ek\|_\ast,\\
 \| \etkone\|_\ast&\leq  \frac{4}{3}\alpha_2 C_p ( C_I H)^{1/2}\|\ek\|_\ast.
\end{cases}$$

Finally adding the equations gives the desired result.
\end{proof}

\begin{theorem}\label{thm:nonoiseresidual}
Let $\mu_1\geq \frac{\nu}{4C_I^2 H^2}$, $\mu_2\geq \frac{\kappa}{4C_I^2H^2}$, $R_i \nu^{-1}C_p^2<1$, $C_p^{-1/2} \frac{4}{3}(C_I H)^{1/2}<1$ and $\frac{4}{3}\sqrt{C_IH}(\alpha_1+\alpha_2)<1$. Then the residual of CDA-Picard converges linearly with rate at least $ \frac{4}{3}\sqrt{C_IH}(\alpha_1+\alpha_2)$:
$$
\|u^{k+1}-u^k\|_\ast+(1- \frac{4}{3}R_i \nu^{-1}C_p^2 )\| T^{k+1}-T^k\|_{\ast}\leq \frac{4}{3}(\alpha_1+\alpha_2)(C_I H)^{1/2}\|u^k-u^{k-1}\|_\ast . 
$$
\end{theorem}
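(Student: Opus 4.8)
The plan is to run the proof of Theorem~\ref{thm:nonoise} almost verbatim, but with the error against the true solution replaced by the difference of consecutive iterates. Write the momentum and temperature residuals as $r_u := u^{k+1}-u^k$ and $r_T := T^{k+1}-T^k$. I would evaluate the CDA-Picard weak form \eqref{CDAWF} once at the step that produces $(u^{k+1},T^{k+1})$ (convecting field $u^k$) and once at the step that produces $(u^k,T^k)$ (convecting field $u^{k-1}$), and subtract. The data terms $(f,v)$, $(g,w)$ and the data part of the nudging (the $-u$ and $-T$ pieces) are identical at both steps and cancel, leaving only $\mu_1(I_H(r_u),I_H v)$, $\mu_2(I_H(r_T),I_H w)$ and the buoyancy coupling $(R_i(0~r_T)^T,v)$.

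The only genuinely new algebraic step is that both arguments of the trilinear forms now change between iterations. Adding and subtracting $b(u^k,u^k,v)$ splits the momentum nonlinearity as
$$
b(u^k,u^{k+1},v)-b(u^{k-1},u^k,v)=b(u^k,r_u,v)+b(u^k-u^{k-1},u^k,v),
$$
and analogously for $\hat{b}$. Testing with $v=r_u$, $w=r_T$ and using skew-symmetry to kill $b(u^k,r_u,r_u)$ and $\hat{b}(u^k,r_T,r_T)$ leaves
$$
\begin{cases}
\nu\|\nabla r_u\|^2+\mu_1\|I_H(r_u)\|^2=(R_i(0~r_T)^T,r_u)-b(u^k-u^{k-1},u^k,r_u),\\
\kappa\|\nabla r_T\|^2+\mu_2\|I_H(r_T)\|^2=-\hat{b}(u^k-u^{k-1},T^k,r_T).
\end{cases}
$$
This is term-for-term the system in the proof of Theorem~\ref{thm:nonoise}, under the dictionary $e^{k+1}\mapsto r_u$, $e_T^{k+1}\mapsto r_T$, $e^k\mapsto u^k-u^{k-1}$, with the true solution $(u,T)$ in the middle slots replaced by the iterate $(u^k,T^k)$. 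From here I would reuse that proof unchanged: the interpolation/triangle-inequality lower bounds $\nu\|\nabla r_u\|^2+\mu_1\|I_H(r_u)\|^2\ge\tfrac{3\nu}{4}\|r_u\|_\ast^2$ and its temperature analog (valid under $\mu_1\ge\tfrac{\nu}{4C_I^2H^2}$, $\mu_2\ge\tfrac{\kappa}{4C_I^2H^2}$), then Lemma~\ref{starbound} and \eqref{bbound2} on the right, and finally add the two scalar inequalities and move the $\|r_T\|_\ast$ term to the left to obtain the stated contraction with rate $\tfrac43\sqrt{C_IH}(\alpha_1+\alpha_2)$.

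The main obstacle is supplying the a priori bounds $\|\nabla u^k\|\le M_1$ and $\|\nabla T^k\|\le M_2$ on the CDA-Picard iterates, which are what turn $C_s\|\nabla u^k\|$ and $C_s\|\nabla T^k\|$ into the exact constants $\alpha_1=C_s\nu^{-1}M_1$ and $\alpha_2=C_s\kappa^{-1}M_2$ when bounding $b(u^k-u^{k-1},u^k,r_u)$ and $\hat{b}(u^k-u^{k-1},T^k,r_T)$. This is the one place the argument departs from the error proof, where these bounds came for free from Lemma~\ref{BoussStability}; here I would need the CDA-Picard analog of Lemma~\ref{Lemma:PicardBound}. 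It is delicate because testing \eqref{CDAWF} with the iterate itself produces the nudging term $\mu_i\|I_H(\cdot)\|^2$ together with a data-dependent contribution, so showing that the iterates inherit the bounds $M_1,M_2$ (rather than bounds that degrade as $H\to0$) is the real work; modulo this a priori control, the residual estimate is an exact transcription of Theorem~\ref{thm:nonoise}.
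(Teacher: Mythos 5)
Your proposal is correct and follows the paper's proof essentially verbatim: the paper likewise subtracts consecutive CDA-Picard steps (the data and nudging-data terms cancel), performs exactly your add-and-subtract splitting $b(u^k,u^{k+1},v)-b(u^{k-1},u^k,v)=b(u^k,r_u,v)+b(u^k-u^{k-1},u^k,v)$ and its $\hat{b}$ analog, kills $b(u^k,r_u,r_u)$ and $\hat{b}(u^k,r_T,r_T)$ by skew-symmetry, and then reuses the upper bounds (Lemma \ref{starbound}, \eqref{bbound2}) and the $\frac{3\nu}{4}\|\cdot\|_\ast^2$-type lower bounds from Theorem \ref{thm:nonoise} before adding the two inequalities. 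The one step you flag as ``the real work'' --- uniform bounds $\|\nabla u^k\|\leq M_1$, $\|\nabla T^k\|\leq M_2$ for the CDA-Picard iterates, needed to identify the constants $\alpha_1,\alpha_2$ --- is dispatched in the paper by a single unproven sentence asserting the iterates ``satisfy the same stability bound as Lemma \ref{Lemma:PicardBound},'' so your caution (that testing \eqref{CDAWF} with the iterate produces an indefinite, data-dependent nudging contribution obstructing the naive energy argument) identifies precisely the thinnest point of the paper's own proof rather than anything you are missing from it.
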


\begin{rem}
The conditions and rates for convergence of the error and residual for CDA-Picard are equivalent.
\end{rem}

\begin{proof}
Let $\ekone=u^k-u^{k+1}$ and $\etkone=T^k-T^{k+1}$. We subtract \eqref{CDAnoiseWF} at time $k$ and time $k+1$, add and subtract terms, and set $v=\ekone$ and $w=\etkone$ to get
\begin{equation}\label{eq:res1}
\begin{cases}
b(e^k , \uk ,\ekone)+b(\uk , \ekone ,\ekone)+\nu\|\nabla \ekone\|^2 +\mu_1\| I_H(\ekone) \|^2 &= (R_i (0~\etkone)^T,\ekone),\\
\hat{b}(e^k, T^k,\etkone)+\hat{b}(\uk, \etkone ,\etkone)+\kappa\|\nabla \etkone\|^2+\mu_2 \|I_H(\etkone)\|^2&=0.
\end{cases}
\end{equation}
The skew symmetry of $b$ and $\hat{b}$ vanishes nonlinear terms in \eqref{eq:res1} yielding,
$$
\begin{cases}
b(e^k , \uk ,\ekone)+\nu\|\nabla \ekone\|^2 +\mu_1\| I_H(\ekone) \|^2 &= (R_i (0~\etkone)^T,\ekone),\\
\hat{b}(e^k, T^k,\etkone)+\kappa\|\nabla \etkone\|^2+\mu_2 \|I_H(\etkone)\|^2&=0.
\end{cases}
$$
The solutions $\uk$, $T^k$ for $k=1,...$ satisfy the same stability bound as Lemma \ref{Lemma:PicardBound}. We use this and \eqref{bbound2} to obtain
$$
\begin{cases}
\nu\|\nabla \ekone\|^2 +\mu_1\| I_H(\ekone) \|^2 &\leq R_iC_p^{3/2}(C_I H)^{1/2} \|\etkone\|_\ast \|\ekone\|_\ast + \alpha_1\nu(C_I H)^{1/2}\| e^k\|_\ast \|\ekone\|_\ast,\\
\kappa\|\nabla \etkone\|^2+\mu_2 \|I_H(\etkone)\|^2&\leq \alpha_2\kappa (C_I H)^{1/2}\| e^k\|_\ast \|\etkone\|_{\ast}.
\end{cases}
$$
We lower bound the left hand side analogously to Theorem \ref{thm:nonoise},
$$
\begin{cases}
\|\ekone\|_\ast &\leq \frac{4}{3}\nu^{-1}R_i C_p^{3/2}(C_I H)^{1/2} \|\etkone\|_\ast + \frac{4}{3} \alpha_1(C_I H)^{1/2}\|e^k\|_\ast, \\
\|\etkone\|_\ast&\leq \frac{4}{3} \alpha_2 (C_I H)^{1/2}\|e^k\|_\ast .
\end{cases}
$$
Finally we add the equations and simplify to get the desired results.
\end{proof}


If we nudge just $u$ then this is equivalent to setting $\mu_2=0$ in \eqref{CDAWF} which gives the following result.
\begin{cor}
Let $\mu_1\geq \frac{\nu}{4C_I^2 H^2}$, $\mu_2=0$, $R_i \nu^{-1}C_p^2<1$, $C_p^{-1/2} \frac{4}{3}(C_I H)^{1/2}<1$ and $(C_I H)^{1/2}(\frac{4}{3}\alpha_1+C_p^{1/2}\alpha_2)<1$. Then  CDA-Picard converges linearly with rate at least $ (C_I H)^{1/2}(\frac{4}{3}\alpha_1+C_p^{1/2}\alpha_2)$:
$$
\| u-u^{k+1}\|_\ast +(1- \frac{4}{3} \nu^{-1} R_i C_p^{3/2} ) \| \nabla (T-T^{k+1})\| \leq \frac{4}{3}( C_I H)^{1/2}(\alpha_2+\alpha_1)\|u-u^k\|_\ast,
$$
\end{cor}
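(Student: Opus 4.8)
The plan is to follow the proof of Theorem \ref{thm:nonoise} almost verbatim, the only structural change being that with $\mu_2=0$ the temperature equation loses its nudging term and so must be controlled by a standard Picard-type estimate rather than by the weighted $\ast$-norm argument. First I would set $\ekone = u-\ukone$ and $\etkone = T-\tkone$, subtract \eqref{BoussWF} from \eqref{CDAWF} with $\mu_2=0$, test with $v=\ekone$ and $w=\etkone$, and add and subtract terms exactly as in Theorem \ref{thm:nonoise}. Skew-symmetry of $b$ and $\hat{b}$ eliminates four trilinear terms, leaving
\begin{equation*}
\nu\|\nabla \ekone\|^2 + \mu_1\|I_H(\ekone)\|^2 = (R_i(0~\etkone)^T,\ekone) - b(\ek, u, \ekone), \qquad \kappa\|\nabla \etkone\|^2 = -\hat{b}(\ek, T, \etkone).
\end{equation*}

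For the temperature equation, since there is now no nudging term to fold into an $\ast$-norm, I would bound the right-hand side using the all-gradient estimate \eqref{bbound}, the stability bound $\|\nabla T\|\leq M_2$ from Lemma \ref{BoussStability}, and $\|\nabla \ek\|\leq \|\ek\|_\ast$, obtaining the usual Picard slaving estimate $\|\nabla \etkone\|\leq C_p^{1/2}\alpha_2\|\ek\|_\ast$. This is precisely the bound of Lemma \ref{Lemma:PicardConv}, and crucially it carries no factor $(C_I H)^{1/2}$: the temperature error is merely slaved to the velocity error and does not itself inherit the mesh-scaling improvement.

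The velocity equation is treated exactly as in Theorem \ref{thm:nonoise}. Using $\mu_1\geq \frac{\nu}{4C_I^2 H^2}$ and the interpolation inequality I would lower bound the left-hand side by $\frac{3\nu}{4}\|\ekone\|_\ast^2$, and bound $b(\ek,u,\ekone)$ by $\nu\alpha_1(C_I H)^{1/2}\|\ek\|_\ast\|\ekone\|_\ast$ via \eqref{bbound2} and Lemma \ref{starbound}. The one genuinely new point is the buoyancy term: lacking $\ast$-norm control of $\etkone$, I would instead use Poincar\'e, $\|\etkone\|\leq C_p\|\nabla\etkone\|$, together with $\|\ekone\|\leq C_p^{1/2}(C_I H)^{1/2}\|\ekone\|_\ast$ from the third estimate of Lemma \ref{starbound}, to get $R_i C_p^{3/2}(C_I H)^{1/2}\|\nabla\etkone\|\,\|\ekone\|_\ast$. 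Dividing by $\frac{3\nu}{4}\|\ekone\|_\ast$ then gives
\begin{equation*}
\|\ekone\|_\ast \leq \frac{4}{3}\nu^{-1}R_i C_p^{3/2}(C_I H)^{1/2}\|\nabla\etkone\| + \frac{4}{3}\alpha_1(C_I H)^{1/2}\|\ek\|_\ast.
\end{equation*}

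To finish I would add this to the temperature estimate and move the buoyancy contribution $\frac{4}{3}\nu^{-1}R_i C_p^{3/2}(C_I H)^{1/2}\|\nabla\etkone\|$ to the left, producing the factor $(1-\frac{4}{3}\nu^{-1}R_i C_p^{3/2}(C_I H)^{1/2})$ in front of $\|\nabla\etkone\|$; invoking $(C_I H)^{1/2}\leq 1$ and the hypotheses $R_i\nu^{-1}C_p^2<1$ and $C_p^{-1/2}\frac{4}{3}(C_I H)^{1/2}<1$ keeps this coefficient positive and simplifies it to the stated $(1-\frac{4}{3}\nu^{-1}R_i C_p^{3/2})$. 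I expect the main obstacle to be exactly this absorption step: because nudging only $u$ leaves the temperature uncontrolled in the $\ast$-norm, the buoyancy coupling cannot be made small by $(C_I H)^{1/2}$ and must instead be moved to the left-hand side, which is what forces the well-posedness-type condition on $R_i\nu^{-1}$ and confines the argument to the regime where that coefficient stays positive. It is worth flagging that, unlike the two-variable case, the genuine $H^{1/2}$ acceleration here acts on the velocity; substituting the temperature estimate into the velocity estimate makes this explicit, showing the velocity contracts at rate $\frac{4}{3}(C_I H)^{1/2}(\alpha_1+\nu^{-1}R_i C_p^2\alpha_2)$, with the temperature error following by slaving.
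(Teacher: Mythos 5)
Your velocity-equation treatment and the final assembly track the paper's proof exactly, but there is a genuine gap in how you handle the temperature equation, and it costs you the stated rate. You bound $\hat{b}(\ek,T,\etkone)$ with the all-gradient estimate \eqref{bbound} and conclude $\|\nabla \etkone\|\leq C_p^{1/2}\alpha_2\|\ek\|_\ast$, asserting that the temperature error ``crucially carries no factor $(C_IH)^{1/2}$'' because $\etkone$ has no $\ast$-norm control. This overlooks that the trilinear term contains $\ek$ in the convecting slot, and $\ek$ \emph{is} $\ast$-controlled, since the velocity is nudged. The paper's proof instead bounds this term with the $L^2$--$H^1$ interpolation placed on the first argument,
\begin{equation*}
|\hat{b}(\ek,T,\etkone)|\leq C_s\|\ek\|^{1/2}\|\nabla \ek\|^{1/2}\|\nabla T\|\|\nabla \etkone\|,
\end{equation*}
(a standard bound, used implicitly since the preliminaries only list the versions with interpolation in the last slot), then applies the first inequality of Lemma \ref{starbound}, $\|\ek\|^{1/2}\|\nabla \ek\|^{1/2}\leq (C_IH)^{1/2}\|\ek\|_\ast$, and Lemma \ref{BoussStability} to obtain $\|\nabla \etkone\|\leq \alpha_2 C_p^{1/2}(C_IH)^{1/2}\|\ek\|_\ast$: the $H^{1/2}$ gain in the temperature estimate comes from the velocity error, not from $\etkone$.

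The consequence is not cosmetic. With your slaving bound, adding the two estimates yields a contraction factor $\tfrac{4}{3}(C_IH)^{1/2}\alpha_1 + C_p^{1/2}\alpha_2$, in which $\alpha_2$ is unscaled by $H^{1/2}$; this is strictly weaker than the corollary's rate $(C_IH)^{1/2}(\tfrac{4}{3}\alpha_1+C_p^{1/2}\alpha_2)$ and cannot establish convergence when $C_p^{1/2}\alpha_2\geq 1$ --- precisely the regime where plain Picard fails and where the paper's claim that nudging $u$ alone still gives the $H^{1/2}$ improvement has content. Your closing remark that the velocity alone contracts at rate $\tfrac{4}{3}(C_IH)^{1/2}(\alpha_1+\nu^{-1}R_iC_p^2\alpha_2)$ after substitution does not repair this, because the corollary's displayed inequality explicitly carries $(C_IH)^{1/2}$ on the $\alpha_2$ term, which your argument does not deliver. (In fairness, the corollary's statement is itself slightly inconsistent --- the sentence gives rate $(C_IH)^{1/2}(\tfrac{4}{3}\alpha_1+C_p^{1/2}\alpha_2)$ while the display has $\tfrac{4}{3}(C_IH)^{1/2}(\alpha_1+\alpha_2)$ --- but both versions scale $\alpha_2$ by $H^{1/2}$, so the gap in your proposal is real either way.)
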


\begin{proof}
Let $\ekone=u-u^{k+1}$ and $\etkone=T-T^{k+1}$. We subtract \eqref{BoussWF} from \eqref{CDAWF} and set $v=\ekone$ and $w=\etkone$. Analagously to Theorem \ref{thm:nonoise} we add and subtract terms and using
the skew symmetry of $b$ and $\hat{b}$ vanishes four nonlinear terms yielding
$$\begin{cases}
\nu\|\nabla \ekone\|^2 +\mu_1\|I_H(\ekone)\|^2&= (R_i (0 ~\etkone)^T,\ekone)-b(\ek, u,\ekone),\\
\kappa \|\nabla \etkone\|^2&=-\hat{b}(\ek,  T,\etkone).
\end{cases}$$
We use Lemma \ref{starbound}, \eqref{bbound2}, Lemma \ref{BoussStability} to upper bound the right hand side as
\begin{equation}\label{CDA2}
\begin{cases}
\nu\|\nabla \ekone\|^2 +\mu_1\|I_{H}(\ekone)\|^2&\leq R_i C_p^{3/2}(C_IH)^{1/2}\| \nabla \etkone\| \|  \ekone\|_{\ast} +\alpha_1\nu( C_I H)^{1/2}\|\ek\|_\ast \|\ekone\|_\ast,\\
\kappa \|\nabla \etkone\|^2 &\leq \alpha_2 C_p^{1/2}(C_I H)^{1/2} \kappa\|\ek\|_\ast \|\nabla \etkone\|.
\end{cases}
\end{equation}

Next we lower bound the left hand sides using the interpolation bound and the triangle inequality as in Theorem \ref{thm:nonoise} 
 and use this in \eqref{CDA2} to get
$$\begin{cases}
\| \ekone\|_\ast &\leq  \frac{4}{3} \nu^{-1} R_i C_p^{3/2}(C_IH)^{1/2} \| \nabla \etkone\|+\frac{4}{3}\alpha_1( C_I H)^{1/2}\|\ek\|_\ast,\\
 \| \nabla \etkone\|&\leq  \alpha_2 C_p^{1/2} (C_I H)^{1/2}\|\ek\|_\ast  
\end{cases}$$

Finally adding the equations gives the desired result.
\end{proof}

If we nudge just $T$ then this is equivalent to setting $\mu_1=0$ in \eqref{CDAWF} which only gives the following result.
\begin{cor}
Let $\mu_1=0$, $\mu_2\geq \frac{\nu}{4C_I^2 H^2}$, $R_i \nu^{-1}C_p^2<1$, and $(C_p^{1/2}\alpha_1+\frac{4}{3}\sqrt{C_IH}\alpha_2)<1$. Then  CDA-Picard converges linearly with rate at least $ (C_p^{1/2}\alpha_1+\frac{4}{3}\sqrt{C_IH}\alpha_2)$:
$$
\| \nabla (u-u^{k+1})\| + (1-\nu^{-1} R_i C_p^{2} )\| T-T^{k+1}\|_\ast  \leq (C_p^{1/2}\alpha_1+ \frac{4}{3}\alpha_2  ( C_I H)^{1/2})\|\nabla (u-u^k)\|  $$
\end{cor}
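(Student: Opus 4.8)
The plan is to mirror the energy argument of Theorem~\ref{thm:nonoise}, but with the roles of the two equations reversed: here only the temperature equation carries a nudging term, so only $\etkone$ can be controlled in the $\ast$-norm, while the velocity error $\ekone$ must be carried in the plain $\|\nabla\cdot\|$ norm. First I would set $\ekone=u-u^{k+1}$ and $\etkone=T-T^{k+1}$, subtract \eqref{BoussWF} from \eqref{CDAWF} with $\mu_1=0$, and test with $v=\ekone$, $w=\etkone$. After the standard add-and-subtract splitting of the nonlinear terms ($b(u,u,\cdot)-b(\uk,\ukone,\cdot)=b(\ek,u,\cdot)+b(\uk,\ekone,\cdot)$, and similarly for $\hat b$), the skew-symmetry of $b$ and $\hat b$ kills $b(\uk,\ekone,\ekone)$ and $\hat b(\uk,\etkone,\etkone)$, leaving $\nu\|\nabla\ekone\|^2=(R_i(0~\etkone)^T,\ekone)-b(\ek,u,\ekone)$ and $\kappa\|\nabla\etkone\|^2+\mu_2\|I_H(\etkone)\|^2=-\hat b(\ek,T,\etkone)$.

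Then I would process the two equations separately. For the momentum equation (unnudged) I would bound the coupling term by $R_i\|\etkone\|\,\|\ekone\|$ and, crucially, put the $\ast$-norm on the nudged variable via $\|\etkone\|\le C_p\|\etkone\|_\ast$ (Lemma~\ref{starbound}) while using Poincar\'e $\|\ekone\|\le C_p\|\nabla\ekone\|$ on the velocity; this produces the coefficient $R_iC_p^2$. For the nonlinear term I would apply \eqref{bbound2} together with Lemma~\ref{BoussStability} ($\|\nabla u\|\le M_1$, $C_sM_1=\alpha_1\nu$) and then Poincar\'e to convert $\|\ekone\|^{1/2}\|\nabla\ekone\|^{1/2}\le C_p^{1/2}\|\nabla\ekone\|$; dividing through by $\nu\|\nabla\ekone\|$ yields $\|\nabla\ekone\|\le\nu^{-1}R_iC_p^2\|\etkone\|_\ast+C_p^{1/2}\alpha_1\|\nabla\ek\|$. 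For the temperature equation (nudged) I would lower bound the left side exactly as in Theorem~\ref{thm:nonoise}, splitting $\kappa\|\nabla\etkone\|^2=\tfrac{\kappa}{4}\|\nabla\etkone\|^2+\tfrac{3\kappa}{4}\|\nabla\etkone\|^2$, using the interpolation bound and $\mu_2\ge\tfrac{\kappa}{4C_I^2H^2}$ to reach $\tfrac{3\kappa}{4}\|\etkone\|_\ast^2$; the right side I would bound by \eqref{bbound2} and the first estimate of Lemma~\ref{starbound}, namely $\|\etkone\|^{1/2}\|\nabla\etkone\|^{1/2}\le(C_IH)^{1/2}\|\etkone\|_\ast$, giving $\|\etkone\|_\ast\le\tfrac43\alpha_2(C_IH)^{1/2}\|\nabla\ek\|$. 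Adding the two scalar inequalities and moving the $\nu^{-1}R_iC_p^2\|\etkone\|_\ast$ term to the left then gives the claim, with $(1-\nu^{-1}R_iC_p^2)>0$ by the hypothesis $R_i\nu^{-1}C_p^2<1$.

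The main obstacle — and the reason the statement \emph{only} yields this weaker conclusion — is structural rather than computational: because $u$ is not nudged there is no zeroth-order term in the momentum equation, so its left side cannot be lower bounded in the $\ast$-norm and the velocity error is available only in $\|\nabla\cdot\|$. Consequently the velocity nonlinearity $b(\ek,u,\ekone)$ must be estimated with Poincar\'e rather than with the $H$-dependent estimate of Lemma~\ref{starbound}, so its contribution to the contraction factor is $C_p^{1/2}\alpha_1$ with no accompanying $(C_IH)^{1/2}$. Thus refining the measurement spacing $H$ cannot drive this part of the rate to zero, in contrast to Theorem~\ref{thm:nonoise} and the $\mu_2=0$ corollary, and the best one can hope for is the plateau $C_p^{1/2}\alpha_1$; this is precisely why nudging temperature alone cannot accelerate convergence the way nudging $u$ does. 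I would also be careful to keep the $\ast$-norm/Poincar\'e choices consistent so that the coupling coefficient emerges exactly as $\nu^{-1}R_iC_p^2$ (matching the $(1-\nu^{-1}R_iC_p^2)$ factor on the left), since an inconsistent split there would introduce a spurious $H$ factor or the wrong power of $C_p$.
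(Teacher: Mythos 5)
Your proposal is correct and follows essentially the same route as the paper's proof: the identical error equations after skew-symmetry, the bound $R_iC_p^2\|\etkone\|_\ast\|\nabla\ekone\|$ on the coupling term, \eqref{bbound} (equivalently \eqref{bbound2} plus Poincar\'e) with $\|\nabla u\|\le M_1$ for the velocity nonlinearity, \eqref{bbound2} with $\|\etkone\|^{1/2}\|\nabla\etkone\|^{1/2}\le (C_IH)^{1/2}\|\etkone\|_\ast$ for the temperature nonlinearity, the $\tfrac{3\kappa}{4}\|\etkone\|_\ast^2$ lower bound on the nudged equation only, and the same addition/absorption step; your structural explanation of why the $\alpha_1$ contribution carries no $(C_IH)^{1/2}$ factor is exactly the point the paper is making with this corollary. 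One minor remark: you correctly use $\mu_2\ge \kappa/(4C_I^2H^2)$, which is what the lower bound on the temperature equation actually requires, whereas the hypothesis as printed in the statement reads $\mu_2\ge \nu/(4C_I^2H^2)$ --- apparently a typo carried over from the velocity condition (harmless in the paper's tests, where $\nu=\kappa$).
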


\begin{proof}
Let $\ekone=u-u^{k+1}$ and $\etkone=T-T^{k+1}$. We subtract \eqref{BoussWF} from \eqref{CDAWF} and set $v=\ekone$ and $w=\etkone$. Analagously to Theorem \ref{thm:nonoise} we add and subtract terms and using 
 the skew symmetry of $b$ and $\hat{b}$ vanishes 4 terms yielding
$$\begin{cases}
\nu\|\nabla \ekone\|^2 &= (R_i (0 ~\etkone)^T,\ekone)-b(\ek, u,\ekone),\\
\kappa \|\nabla \etkone\|^2+ \mu_2\|I_H(\etkone)\|^2&=-\hat{b}(\ek,  T,\etkone).
\end{cases}$$
We use Lemma \ref{starbound}, \eqref{bbound2}, \eqref{bbound}, Lemma \ref{BoussStability} to upper bound the right hand side as
\begin{equation}\label{CDA3}
\begin{cases}
\nu\|\nabla \ekone\|^2 +\mu_1\|I_{H}(\ekone)\|^2&\leq R_i C_p^{2}\| \etkone\|_\ast \| \nabla \ekone\| +\alpha_1\nu C_p^{1/2}\|\nabla\ek\| \|\nabla \ekone\|,\\
\kappa \|\nabla \etkone\|^2 + \mu_2\|I_{H}(\etkone)\|^2 &\leq \alpha_2\kappa  ( C_I H)^{1/2}\|\nabla \ek\| \|\etkone\|_\ast.
\end{cases}
\end{equation}

Next we lower bound the left hand sides using the interpolation bound and the triangle inequality analogously to Theorem \ref{thm:nonoise} 
 and use this in \eqref{CDA3} to get
$$\begin{cases}
\| \nabla \ekone\| &\leq R_i C_p^{2} \| \etkone\|_\ast+\alpha_1C_p^{1/2}\|\nabla \ek\|,\\
 \| \etkone\|_\ast&\leq  \frac{4}{3}\alpha_2  ( C_I H)^{1/2}\|\nabla \ek\|.
\end{cases}$$

Finally adding the equations gives the desired result.
\end{proof}

Therefore, with accurate solution data there is a direct correlation between convergence, accuracy, and the amount of data collected $H$. Furthermore, CDA does not worsen the restrictions of Picard, so it can only improve convergence properties. 

\section{CDA-Picard for Boussinesq equation with noisy data}\label{noisyAnalysis}
We suppose now that the data has noise (error) given by some $\epsilon(x)$, meaning that we have data given by $u+\epsilon_u$ and $T+\epsilon_T$ on the data points. Then the weak formulation of CDA-Picard for the Boussinesq equations \eqref{CDA} with noise is given by: Given $f\in X^\ast$ and $g^\ast\in D^\ast$, find $(u,T)\in X\times D$ satisfying
\begin{equation}\label{CDAnoiseWF}
\begin{cases}
b(\uk, \ukone ,v)+\nu(\nabla \ukone,\nabla v) +\mu_1(I_H(\ukone-(u+\epsilon_u) ),I_H(v)) &= (f,v) +(R_i (0~\tkone)^T,v)\\
\hat{b}(\uk, \tkone,w)+\kappa (\nabla \tkone,\nabla w)+\mu_2(I_H(\tkone-(T+\epsilon_T)),I_H(w))&=(g,w).
\end{cases}
\end{equation}

\subsection{Convergence of CDA-Picard for Boussinesq equation with noisy data}
We now we analyze CDA-Picard with noise \eqref{CDAnoiseWF}  when nudging both $u$ and $T$. We begin by analyzing the convergence of the residual and then the error.

\begin{theorem}
Let $\mu_1\geq \frac{\nu}{4C_I^2 H^2}$, $\mu_2\geq \frac{\kappa}{4C_I^2H^2}$, $R_i \nu^{-1}C_p^2<1$, $C_p^{-1/2} \frac{4}{3}(C_I H)^{1/2}<1$ and $\frac{4}{3}\sqrt{C_IH}(\alpha_1+\alpha_2)<1$. Then residual of CDA-Picard converges linearly with rate at least $ \frac{4}{3}\sqrt{C_IH}(\alpha_1+\alpha_2)$:
$$\|u^k-u^{k+1}\|_\ast+(1-\frac{4}{3}\nu^{-1}R_i C_p^{3/2}(C_I H)^{1/2} )\|T^k-T^{k+1}\|_\ast \leq \frac{4}{3} (C_I H)^{1/2}(\alpha_1+  \alpha_2)\|u^{k-1}-u^k\|_\ast .$$
\end{theorem}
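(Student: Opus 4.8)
The plan is to exploit the fact that the data error enters \eqref{CDAnoiseWF} only through the fixed quantities $\epsilon_u$ and $\epsilon_T$, which do not depend on the iteration index. Set $\ekone = \uk - \ukone$ and $\etkone = T^k - T^{k+1}$. I would write \eqref{CDAnoiseWF} once at level $k+1$ (lagged coefficient $\uk$) and once at level $k$ (lagged coefficient $u^{k-1}$) and subtract. In the velocity nudging term the difference is $\mu_1(I_H(\ukone - (u+\epsilon_u)), I_H(v)) - \mu_1(I_H(\uk - (u+\epsilon_u)), I_H(v)) = \mu_1(I_H(\ekone), I_H(v))$, so $\epsilon_u$ cancels exactly, and likewise $\epsilon_T$ cancels in the temperature nudging term. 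This is the crucial observation: the noisy residual recursion is structurally identical to the noise-free one, so the argument reduces to transcribing Theorem \ref{thm:nonoiseresidual}.

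From here I follow Theorem \ref{thm:nonoiseresidual} step by step. The lagged nonlinearities are split by adding and subtracting, giving $b(u^{k-1}, \uk, v) - b(\uk, \ukone, v) = b(e^k, \uk, v) + b(\uk, \ekone, v)$ with $e^k = u^{k-1} - \uk$, and similarly for $\hat b$. Choosing $v = \ekone$, $w = \etkone$ and invoking skew-symmetry eliminates $b(\uk, \ekone, \ekone)$ and $\hat b(\uk, \etkone, \etkone)$, leaving exactly the system \eqref{eq:res1}. I then bound the surviving trilinear terms $b(e^k, \uk, \ekone)$ and $\hat b(e^k, T^k, \etkone)$ using \eqref{bbound2} together with Lemma \ref{starbound}, and bound the buoyancy coupling $(R_i(0,\etkone)^T, \ekone)$ by $R_i C_p^{3/2}(C_I H)^{1/2}\|\etkone\|_\ast\|\ekone\|_\ast$.

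The left-hand sides are lower bounded exactly as in Theorem \ref{thm:nonoise}: split off three quarters of the viscous (resp. diffusive) term, replace $\frac{\nu}{4C_I^2H^2}\|\ekone - I_H(\ekone)\|^2$ by the interpolation estimate, combine with $\mu_1\|I_H(\ekone)\|^2$ via $\mu_1 \geq \nu/(4C_I^2 H^2)$ and the triangle inequality to recover $\frac{\nu}{4C_I^2 H^2}\|\ekone\|^2$, and hence obtain $\frac{3\nu}{4}\|\ekone\|_\ast^2$ and $\frac{3\kappa}{4}\|\etkone\|_\ast^2$. Dividing through yields the scalar pair $\|\ekone\|_\ast \leq \frac{4}{3}\nu^{-1}R_i C_p^{3/2}(C_I H)^{1/2}\|\etkone\|_\ast + \frac{4}{3}\alpha_1(C_I H)^{1/2}\|e^k\|_\ast$ and $\|\etkone\|_\ast \leq \frac{4}{3}\alpha_2(C_I H)^{1/2}\|e^k\|_\ast$; adding these and moving the $\etkone$ term to the left gives the stated inequality.

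The main obstacle I anticipate is the a priori stability of the iterates, rather than the residual algebra. The bounds on $b(e^k, \uk, \ekone)$ and $\hat b(e^k, T^k, \etkone)$ require $\|\nabla \uk\| \leq M_1$ and $\|\nabla T^k\| \leq M_2$ so that the constants $\alpha_1,\alpha_2$ appear. In the noise-free case these follow from Lemma \ref{Lemma:PicardBound}, but testing the velocity equation of \eqref{CDAnoiseWF} with $v = \ukone$ produces the noise-dependent remainder $\mu_1(I_H(u+\epsilon_u), I_H(\ukone))$ whose absorbed part scales like $\mu_1 \sim H^{-2}$. So before the residual argument closes cleanly I must establish an a priori bound for the noisy iterates, either recovering the same $M_1, M_2$ under a compatibility/smallness assumption on $\epsilon_u,\epsilon_T$ or replacing $\alpha_1,\alpha_2$ by noise-inflated analogues; once that bound is in hand, everything else is a direct transcription of Theorem \ref{thm:nonoiseresidual}.
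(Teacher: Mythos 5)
Your proposal is exactly the paper's argument: the paper's entire proof is the single observation that the noise terms $I_H(\epsilon_u)$ and $I_H(\epsilon_T)$ cancel when subtracting \eqref{CDAnoiseWF} at consecutive iterations, after which the analysis reduces verbatim to Theorem \ref{thm:nonoiseresidual}, just as you transcribe it. The a priori stability concern you raise at the end is a fair point, but note it applies equally to the paper itself, which in the proof of Theorem \ref{thm:nonoiseresidual} simply asserts that the CDA iterates satisfy the stability bounds of Lemma \ref{Lemma:PicardBound} without addressing the non-vanishing nudging term.
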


\begin{rem}
The sufficient conditions for convergence of the residual for CDA with noisy partial solution data are the same as the sufficient conditions for convergence of error for CDA using accurate partial solution data.
\end{rem}

\begin{proof}
This proof follows analogously to Theorem \ref{thm:nonoiseresidual} because the terms $I_H(\epsilon_u)$ and $I_H(\epsilon_T)$ in \eqref{CDAnoiseWF} at time $k$ and $k+1$ cancel when subtracting these two systems to form an error equation.
\end{proof}

\begin{theorem}
Let $\mu_1\geq \frac{\nu}{4C_I^2 H^2}$, $\mu_2\geq \frac{\kappa}{4C_I^2H^2}$, $ R_i \nu^{-1}C_p^2<1$, $\sqrt{C_I H}\leq \frac{3}{4}C_p$ and $\frac{4}{3}\sqrt{C_IH}(\alpha_1+\alpha_2)<1$. Then the error in CDA-Picard satisfies
\begin{multline*}
\| u-u^{k+1}\|_\ast + (1-\frac{4}{3} \nu^{-1} R_i C_p^{3/2} (C_I H)^{1/2})\| T-T^{k+1}\|_\ast \leq (\frac{4}{3}(\alpha_1+\alpha_2)( C_I H)^{1/2})^{k+1}\|u-u^0\|_\ast\\+ C_I^2\frac{\frac{4}{3}( C_I H)^{1/2}}{1-\frac{4}{3}(\alpha_1+\alpha_2)( C_I H)^{1/2}}(\mu_1 \|\epsilon_u\| +\mu_2\|\epsilon_T\|),
\end{multline*}
\end{theorem}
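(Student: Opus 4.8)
The plan is to mirror the error argument of Theorem~\ref{thm:nonoise} while carefully tracking the noise terms, which now fail to cancel. Setting $\ekone = u-\ukone$ and $\etkone = T-\tkone$, I would subtract the exact weak form \eqref{BoussWF} from the noisy iteration \eqref{CDAnoiseWF} and test with $v=\ekone$, $w=\etkone$. The only structural departure from the accurate-data case is the nudging terms: besides the quadratic contributions $\mu_1\|I_H(\ekone)\|^2$ and $\mu_2\|I_H(\etkone)\|^2$ (which remain on the left), they produce the cross terms $\mu_1(I_H(\epsilon_u),I_H(\ekone))$ and $\mu_2(I_H(\epsilon_T),I_H(\etkone))$. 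Unlike the residual Theorem~\ref{thm:nonoiseresidual}, where two consecutive iterates are differenced and the noise cancels, here these terms \emph{survive} and must be carried to the right-hand side.

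From this point the first portion of the argument is verbatim Theorem~\ref{thm:nonoise}: skew-symmetry of $b$ and $\hat b$ eliminates four nonlinear terms, the surviving convection and coupling terms are controlled by \eqref{bbound2}, Lemma~\ref{starbound}, and the a priori bounds of Lemma~\ref{BoussStability}, and the left-hand sides are bounded below by $\tfrac{3\nu}{4}\|\ekone\|_\ast^2$ and $\tfrac{3\kappa}{4}\|\etkone\|_\ast^2$ using the hypotheses on $\mu_1,\mu_2$. The genuinely new estimate is for the two noise terms. I would apply Cauchy--Schwarz and the interpolation bound $\|I_H v\|\le C_I\|v\|$ to get $\mu_1(I_H(\epsilon_u),I_H(\ekone))\le \mu_1 C_I\|\epsilon_u\|\,\|I_H(\ekone)\|$, and then convert $\|I_H(\ekone)\|$ to the $\ast$-norm via $\|I_H(\ekone)\|\le C_I\|\ekone\|$ together with the third inequality of Lemma~\ref{starbound}; it is this double application of the interpolation bound together with Lemma~\ref{starbound} that yields both the $C_I^2$ prefactor and the crucial $(C_I H)^{1/2}$ scaling of the noise. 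Treating the temperature term identically, dividing the two scalar inequalities by $\tfrac{3\nu}{4}$ and $\tfrac{3\kappa}{4}$, and adding them gives a one-step recursion
$$
\|\ekone\|_\ast + \left(1-\tfrac{4}{3}\nu^{-1}R_iC_p^{3/2}(C_I H)^{1/2}\right)\|\etkone\|_\ast \;\le\; r\,\|\ek\|_\ast + N ,
$$
where $r=\tfrac{4}{3}(\alpha_1+\alpha_2)(C_I H)^{1/2}$ is exactly the accurate-data rate and $N$ collects the noise, of the displayed form $C_I^2\tfrac{4}{3}(C_I H)^{1/2}(\mu_1\|\epsilon_u\|+\mu_2\|\epsilon_T\|)$.

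The ingredient absent from the earlier theorems is converting this one-step estimate into a convergence statement with a noise floor. Since $R_i\nu^{-1}C_p^2<1$ and $\sqrt{C_I H}\le\tfrac34 C_p$ force the temperature coefficient $1-\tfrac{4}{3}\nu^{-1}R_iC_p^{3/2}(C_I H)^{1/2}$ to be positive, the left-hand side bounds $\|\ekone\|_\ast$ from below, so the velocity error alone obeys the scalar recursion $\|\ekone\|_\ast\le r\|\ek\|_\ast + N$. Iterating from $k=0$ and summing the geometric series (which converges precisely because the hypothesis $\tfrac43\sqrt{C_IH}(\alpha_1+\alpha_2)<1$ gives $r<1$) yields $\|\ek\|_\ast \le r^{k}\|u-u^0\|_\ast + \tfrac{N}{1-r}$. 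Substituting this back into the combined one-step inequality and using the identity $\tfrac{rN}{1-r}+N=\tfrac{N}{1-r}$ produces the stated estimate. I expect the main obstacle to be bookkeeping rather than any new idea: one must keep the temperature coefficient and the noise constant $N$ consistent through the two divisions and the addition, and then recognize that although the right-hand side of the one-step bound involves only the velocity error while the left-hand side mixes both variables, the positivity of the temperature coefficient is exactly what lets the geometric telescoping close on the single quantity $\|\ek\|_\ast$.
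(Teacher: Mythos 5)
Your proposal matches the paper's proof essentially step for step: the same error equation from subtracting \eqref{BoussWF} from \eqref{CDAnoiseWF}, the same skew-symmetry cancellations and bounds via \eqref{bbound2}, Lemma \ref{starbound}, and Lemma \ref{BoussStability}, the same double application of the interpolation bound with Lemma \ref{starbound} giving the noise terms the $C_I^2(C_I H)^{1/2}$ scaling, and the same geometric-series unrolling of the one-step recursion to reach the noise floor $\frac{N}{1-r}$. If anything, you make explicit two steps the paper leaves implicit --- that $\sqrt{C_I H}\le \frac{3}{4}C_p$ and $R_i\nu^{-1}C_p^2<1$ ensure positivity of the temperature coefficient so the iteration can close on $\|e^k\|_\ast$ alone, and the identity $\frac{rN}{1-r}+N=\frac{N}{1-r}$ --- so the proposal is correct and follows the paper's route.
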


\begin{proof}
Let $\ekone=u-u^{k+1}$ and $\etkone=T-T^{k+1}$. We subtract \eqref{BoussWF} from \eqref{CDAnoiseWF} and set $v=\ekone$ and $w=\etkone$. Similarly to Theorem \ref{thm:nonoise} we add and subtract terms and using 
the skew symmetry of $b$ and $\hat{b}$ vanishes four terms, yielding
$$\begin{cases}
\nu\|\nabla \ekone\|^2 +\mu_1\|I_H(\ekone)\|^2&= (R_i (0~ \etkone)^T,\ekone)-b(\ek, u,\ekone)-\mu_1(I_H(\epsilon_u),I_H(\ekone)),\\
\kappa \|\nabla \etkone\|^2+ \mu_2\|I_H(\etkone)\|^2&=-\hat{b}(\ek,  T,\etkone)-\mu_2(I_H(\epsilon_T),I_H(\etkone)).
\end{cases}$$
We use Lemma \ref{starbound}, \eqref{bbound}, Lemma \ref{BoussStability} to upper bound the right hand side as
\begin{equation}\label{noise1}
\begin{cases}
\nu\|\nabla \ekone\|^2 +\mu_1\|I_H(\ekone)\|^2&\leq R_i C_p^{3/2}(C_I H)^{1/2}\| \etkone\|_\ast \| \ekone\|_{\ast}+\alpha_1\nu ( C_I H)^{1/2}\|\ek\|_\ast \|\ekone\|_\ast\\
&~~~+\mu_1\|I_H(\epsilon_u)\| \|I_H(\ekone)\|,\\
\kappa \|\nabla \etkone\|^2 + \mu_2\|I_H(\etkone)\|^2&\leq \alpha_2\kappa ( C_I H)^{1/2}\|\ek\|_\ast \|\etkone\|_\ast + \mu_2\|I_H(\epsilon_T)\| \|I_H(\etkone)\|.
\end{cases}
\end{equation}
Using the interpolation bound and Lemma \ref{starbound} we can upper bound the terms
\begin{align*}
\|I_H(\epsilon_u)\| \|I_H(\ekone)\|&\leq C_I^2 ( C_I H)^{1/2} \|\epsilon_u\| \|\ekone\|_\ast,\\
\| I_H(\epsilon_T)\| \|I_H(\etkone)\|& \leq C_I^2 ( C_I H)^{1/2}\| \epsilon_T\| \|\etkone\|_\ast.
\end{align*}

Combining this with \eqref{noise1} gives
\begin{equation}\label{CDA4}
\begin{cases}
\nu\|\nabla \ekone\|^2 +\mu_1\|I_{H}(\ekone)\|^2&\leq R_i C_p^{3/2} (C_IH)^{1/2}\| \etkone\|_\ast \|  \ekone\|_{\ast} +\alpha_1\nu( C_I H)^{1/2}\|\ek\|_\ast \|\ekone\|_\ast \\
&~~~+ \mu_1 C_I^2 ( C_I H)^{1/2}\|\epsilon_u\| \|\ekone\|_\ast,\\
\kappa \|\nabla \etkone\|^2 + \mu_2\|I_{H}(\etkone)\|^2 &\leq \alpha_2\kappa ( C_I H)^{1/2}\|\ek\|_\ast \|\etkone\|_\ast+ \mu_2 C_I^2 ( C_I H)^{1/2}\|\epsilon_T\| \|\etkone\|_\ast.
\end{cases}
\end{equation}

Next we lower bound the left hand sides using the interpolation bound and the triangle inequality analogously to Theorem \ref{thm:nonoise}
 and use this in \eqref{CDA4} to get that
$$\begin{cases}
\| \ekone\|_\ast &\leq  \frac{4}{3} \nu^{-1} R_i C_p^{3/2} (C_I H)^{1/2}\| \etkone\|_\ast+\frac{4}{3}\alpha_1( C_I H)^{1/2}\|\ek\|_\ast+ \frac{4}{3}\nu^{-1}\mu_1 C_I^2 ( C_I H)^{1/2} \|\epsilon_u\| ,\\
 \| \etkone\|_\ast&\leq  \frac{4}{3}\alpha_2 ( C_I H)^{1/2}\|\ek\|_\ast  + \frac{4}{3}\mu_2 \kappa^{-1}C_I^2 ( C_I H)^{1/2}\|\epsilon_T\|. 
\end{cases}$$

Finally adding the equations gives 
\begin{multline*}
\| \ekone\|_\ast + (1-\frac{4}{3} \nu^{-1} R_i C_p^{3/2} (C_I H)^{1/2})\| \etkone\|_\ast \leq \frac{4}{3}(\alpha_1+\alpha_2)( C_I H)^{1/2}\|\ek\|_\ast + \frac{4}{3} C_I^2 ( C_I H)^{1/2}(\mu_1 \nu^{-1}\|\epsilon_u\| + \mu_2 \kappa^{-1}\|\epsilon_T\| ).
\end{multline*}
Recall that $\frac{4}{3}(\alpha_1+\alpha_2)( C_I H)^{1/2}<1$ therefore
\begin{multline*}
\| \ekone\|_\ast + (1-\frac{4}{3} \nu^{-1} R_i C_p^{3/2} (C_I H)^{1/2})\| \etkone\|_\ast \leq (\frac{4}{3}(\alpha_1+\alpha_2)( C_I H)^{1/2})^{k+1}\|e^0\|_\ast\\+ C_I^2\frac{\frac{4}{3}( C_I H)^{1/2}}{1-\frac{4}{3}(\alpha_1+\alpha_2)( C_I H)^{1/2}}(\mu_1\nu^{-1} \|\epsilon_u\| +\mu_2\kappa^{-1}\|\epsilon_T\|),
\end{multline*}
which completes the proof.

\end{proof}

This result shows that the accuracy of the method depends on the noise of the data given by $\epsilon_u$ and $\epsilon_T$. If $\epsilon_u=0$ and $\epsilon_T=0$ then the result reduces to Theorem \ref{thm:nonoise}. Furthermore, CDA-Picard with noise retains the  improved convergence rate of $H^{1/2}(\alpha_1+\alpha_2)$, but only converges up to the level of the noise.

\section{Numerical Results}\label{Numerical}
We now give numerical results for CDA-Picard for differentially heated cavity problems \cite{Layton89}. We consider \eqref{Bouss} with $f=0$, $g=0$, $\nu=\kappa=10^{-1}$, and $Ri$ is varied which varies $Ra$ for the problems. We will consider two test problems. For the first one, $\Omega=(0,1)^2\subset \R^2$, with boundary conditions
\begin{equation}\label{num:2d}
\begin{cases}
u&= 0 ~\text{ on }\partial\Omega,\\
T(0,y)&=0,\\
T(1,y)&=1, \\
\nabla T\cdot n&=0, \text{ on} ~y=0, ~y=1.
\end{cases}
\end{equation}
We study the convergence of CDA-Picard for $Ra=10000$, $100000$, and $1000000$ with varying $H$ and varying $\mu$. For CDA we will use data with noise and without noise.\\

The discretization for the first test problem is as follows. Let $\tau_h$ be a barycenter refinement mesh with max element diameter $h=1/64$ and note that all results are comparable for other choices of h that we tested. Solutions to this system with $Ra=1000000$ are shown in Figure \ref{2dSol}. We define the spaces $X_h=\mathbb{P}_2\cap X$, $Q_h=\mathbb{P}_1^{disc}(\tau_h)\cap Q$, and $D_h=\mathbb{P}_2(\tau_h) \cap D$. The spaces $(X_h,Q_h)$ satisfy an LBB condition and provide divergence free velocity solutions \cite{arnold:qin:scott:vogelius:2D}, therefore all analytical results hold in the discrete spaces just as in $(X,Q)$.\\

\begin{figure}[H]
\centering
\includegraphics[width=3in, height=3in,clip, trim=4cm 2cm 3cm 1cm]{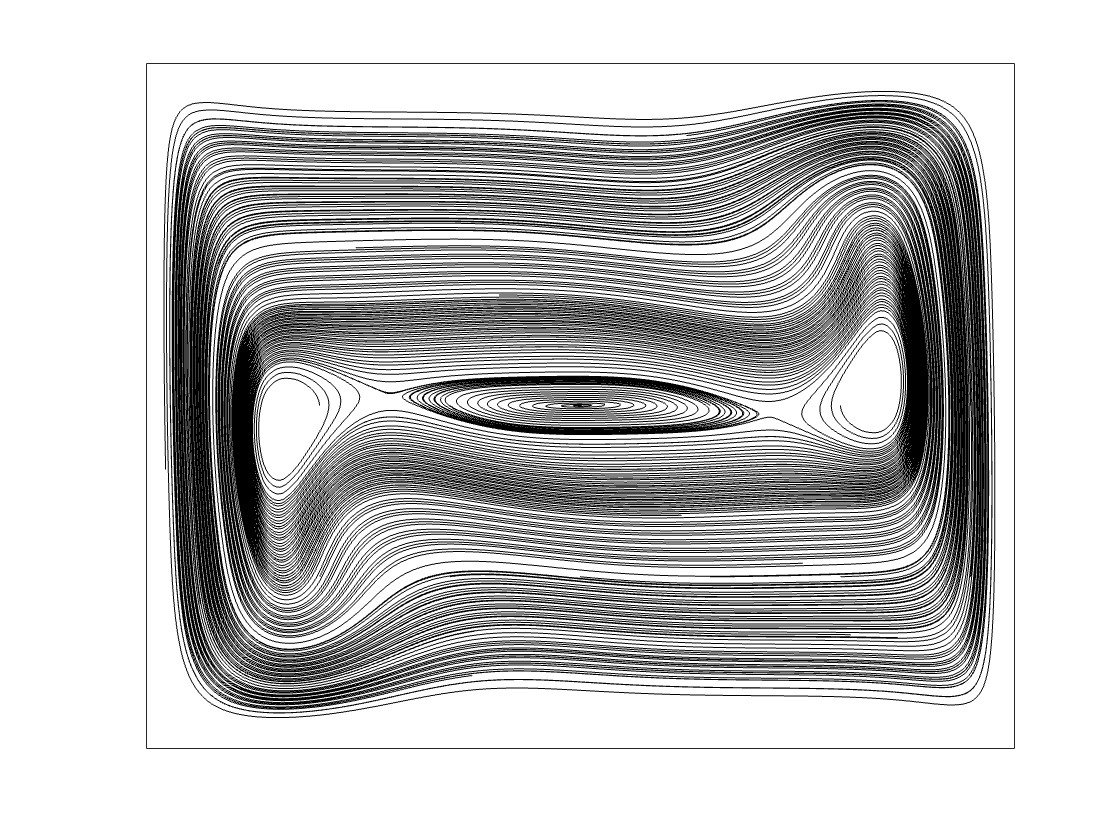}
\includegraphics[width=3.5in, height=3in,clip, trim=6cm 2cm 4cm 1cm]{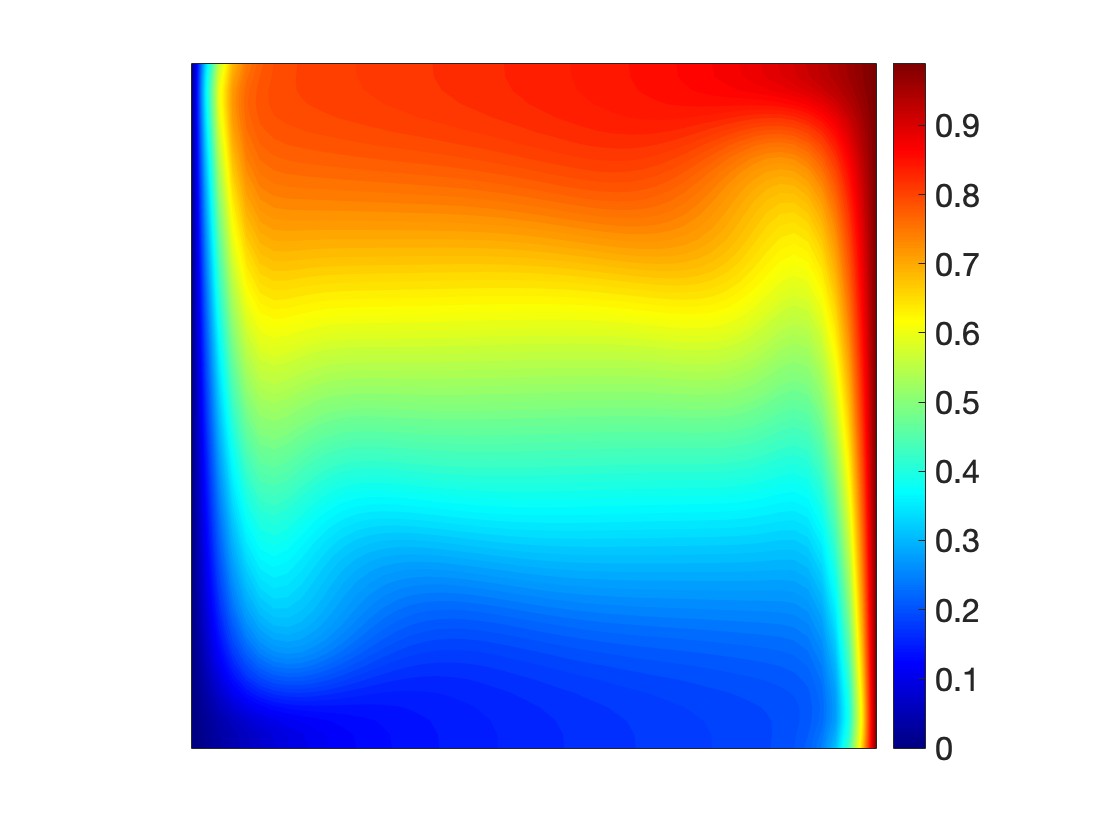}
\caption{Shown above are the computed Boussinesq solution of the differentially heated cavity problem \eqref{num:2d} for velocity streamlines (left) and temperature contours (right) for $Ra=1000000$}\label{2dSol}
\end{figure}

For our second test, we consider the unit cube $\Omega\subset \R^3$ with boundary conditions 
\begin{equation}\label{num:3d}
\begin{cases}
u&= 0 ~\text{ on }\partial\Omega,\\
T(0,y,z)&=0,\\
T(1,y,z)&=1, \\
\nabla T\cdot n&=0, \text{ on} ~y=0, ~y=1,~z=0,~z=1
\end{cases}
\end{equation}
We study the convergence of CDA-Picard for $Ra=10000$, $100000$, and $1000000$ with varying $H$ and varying $\mu$. For CDA we will use data with noise and without noise.\\

For this problem, let $\tau_h$ be a mesh that has been barycenter refined to approximately have a max mesh diameter of $\frac{1}{50}$. Solutions to this system with $Ra=100000$ are shown in Figure \ref{3dSol}. We define a $X_h=\mathbb{P}_3\cap X$, $Q_h=\mathbb{P}_2^{disc}(\tau_h)\cap Q$, and $D_h=\mathbb{P}_3(\tau_h) \cap D$. The spaces $(X_h,Q_h)$ satisfy an LBB condition and provide divergence free velocity solutions \cite{Z10a}, therefore all analytical results hold just as in $(X,Q)$. Recall that Picard decouples the system. Hence we solve the velocity-pressure system using \cite{benzi} which we implement with grad-div stabilization following \cite{HR13}.\\

For CDA, let $\tau_H$ be a mesh with max element diameter $h<H$ whose nodes represent collected data points. Let $I_H$ be the $L^2$ projection operator from $X$ onto $X_H$. This choice of projector allows for CDA implementation by algebraic nudging \cite{RZ21}. We denote the B-norm as
$$\|(u,T)\|_{B}:=\sqrt{\nu \|\nabla u\|^2+\kappa \|\nabla T\|^2}.$$
Having an $H$ dependent norm overcomplicates the results for error and residual, and ultimately we are interested in the $H^1$ convergence results. Therefore we use the $B$-norm in place of the $\ast$-norm and note that in finite dimensional spaces all norms are equivalent.
We use the residual as the stopping criteria
$$\|(u^k,T^k)-(u^{k-1},T^{k-1})\|_{B}=\sqrt{\nu \|\nabla(u^k-u^{k-1})\|^2+\kappa \|\nabla(T^k-T^{k-1})\|^2}<10^{-8}.$$


\begin{figure}[H]
\centering
\includegraphics[width=3.25in, height=3in, clip, trim=3cm 2cm 2cm 3cm]{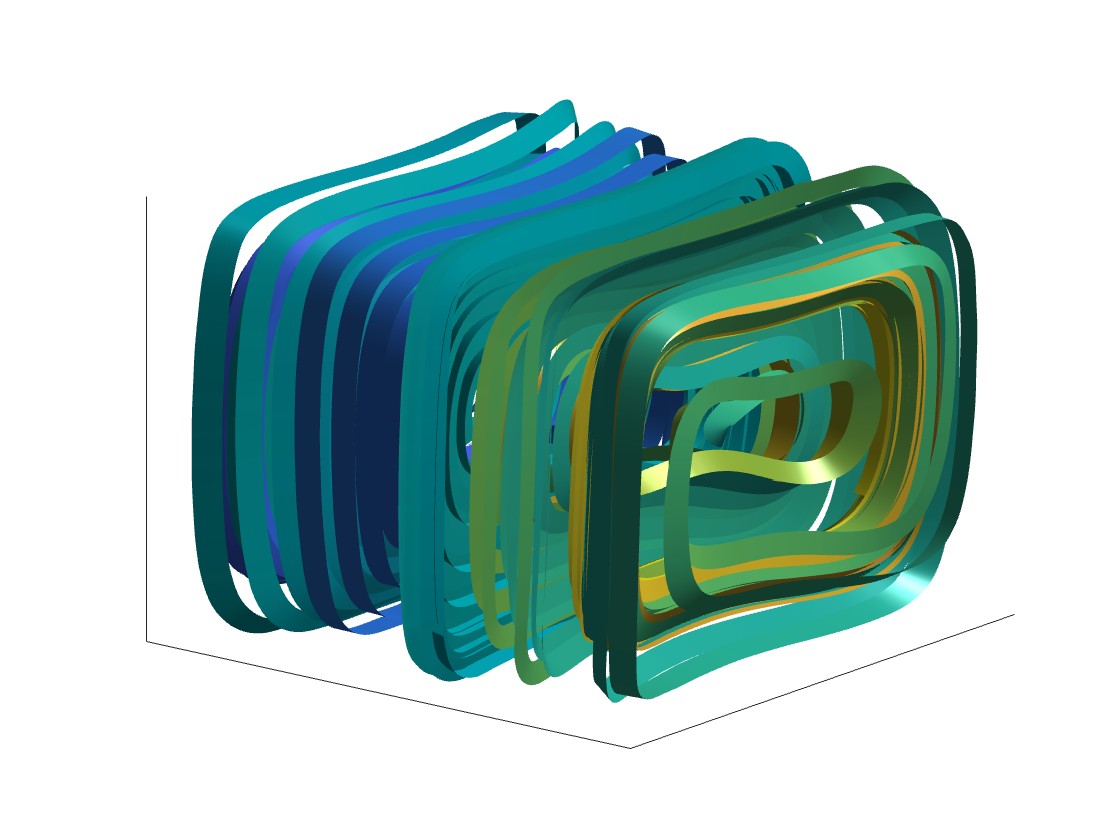}
\includegraphics[width=3.5in, height=3in, clip, trim=0cm 0cm 0cm 1cm]{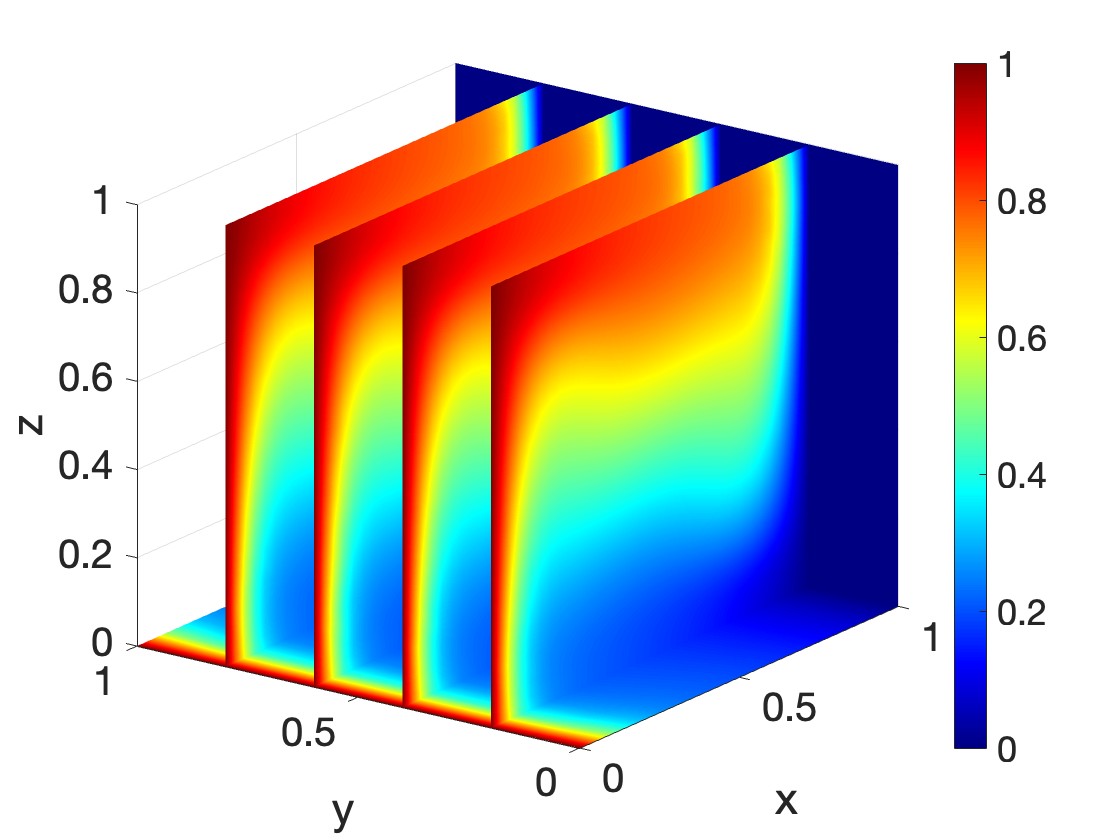}
\caption{Shown above are the computed Boussinesq solution of the differentially heated cavity problem\eqref{num:3d} for velocity streamlines (left) and temperature contours (right) for $Ra=100000$}\label{3dSol}
\end{figure}

\subsection{CDA-Picard without noise}
We begin with CDA-Picard without noise for the heated cavity problem in $2d$ \eqref{num:2d} and give results for $Ra=10000$ and $100000$ with varying $H$ and $\mu_1=\mu_2=1000$. For comparison, we also run Picard without CDA for both and give its results in each plot in the figures. We note that we also performed this test with $\mu_1=\mu_2=1$ and got very similar results for low $Ra$. However, $\mu_1=\mu_2=1000$ performs better for higher $Ra$ and is often necessary for convergence.\\

First, for $Ra=10000$ we see in Figure \ref{fig:nonoise1} (left) that Picard converges but it does so slowly. Immediately when we use CDA-Picard  with $\mu_1=\mu_2=1000$ and $H=1/4$ we see a dramatic improvement in the convergence rate. The amount of data used is relatively small compared to the degrees of freedom (DoF) for the problem. CDA-Picard is further improved as $H$ is decreased fuurther. 
For $Ra=100000$ in Figure \ref{fig:nonoise1} we see that CDA-Picard converges for $\mu=1000$ (right) using $H= \frac{1}{8}, \frac{1}{16}, \text{ and }\frac{1}{32}$ with iteration counts decreasing as $H$ decreases and $\mu$ increases. 


We next provide results for CDA-Picard without noise for the heated cavity problem in $3d$ \eqref{num:3d} for $Ra=10000$ and $100000$ with $\mu_1=\mu_2=1000$ and varying $H$. For comparison, we again run Picard without CDA  and give its results in the figures. For $Ra=10000$ in Figure \ref{3d1e4} (left) we observe that Picard, without CDA, does not converge. In comparison, for CDA-Picard with $H=\frac{1}{5}$ we get convergence. The convergence rate is then improved as $H$ decreases. For $Ra=100000$ we see in Figure \ref{3d1e4} (right) the same behavior with convergence beginning when $H=\frac{1}{10}$.\\ 

\begin{figure}[H]
\centering
\includegraphics[scale=.2]{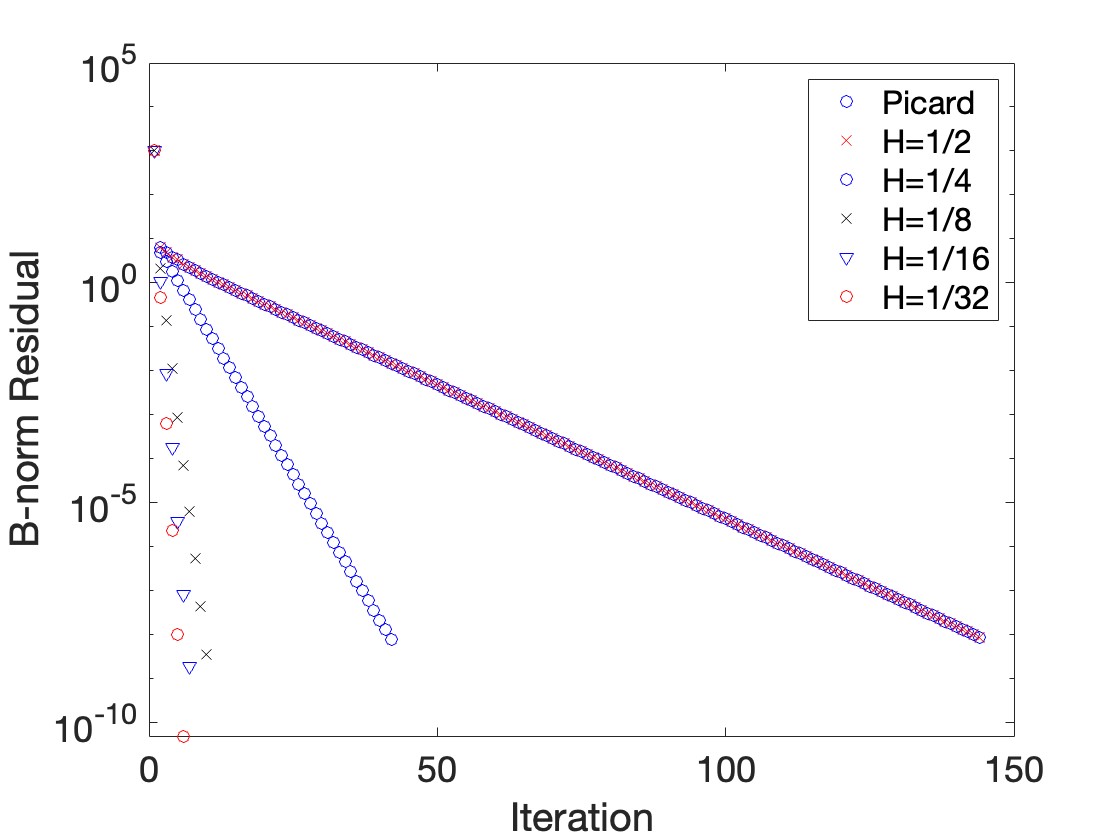}
\includegraphics[scale=.2]{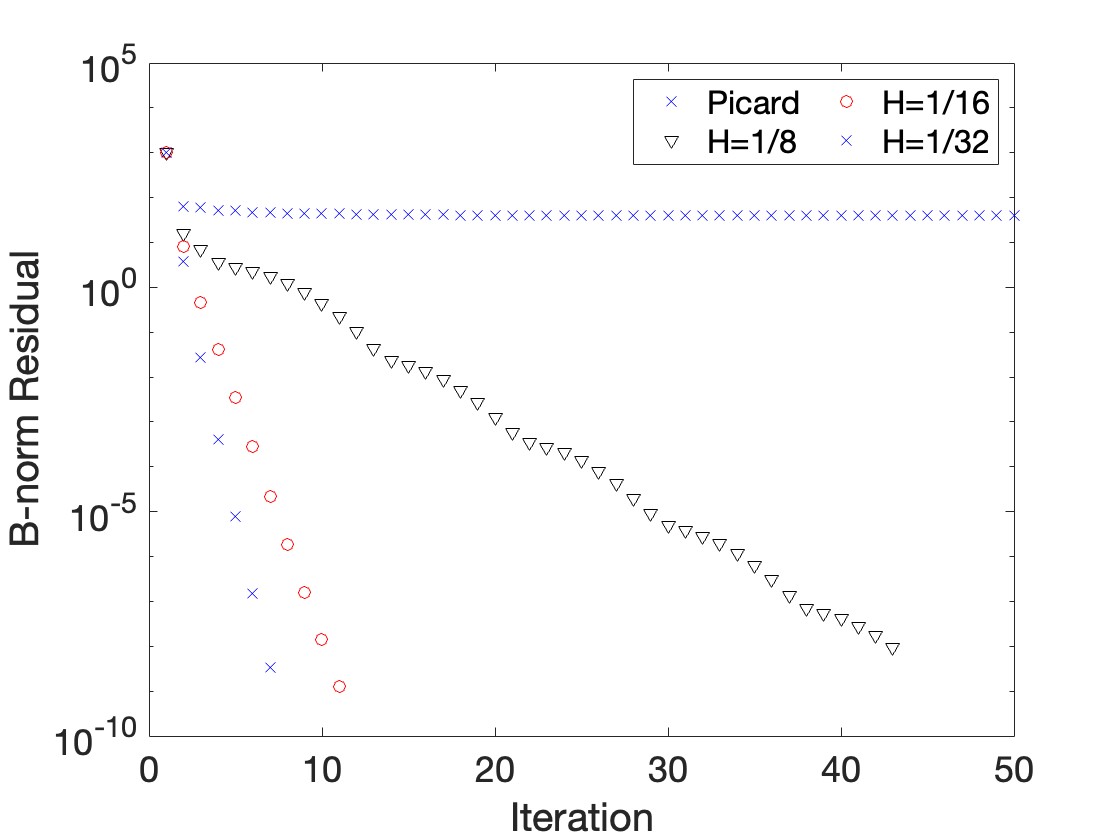}
\caption{Shown above are the convergence plots for the $2d$ heated cavity problem for $Ra=10000$(left) and $Ra=100000$(right) with varying $H$ for $\mu_1=\mu_2=1000.$}\label{fig:nonoise1}
\end{figure}



\begin{figure}[H]
\centering
\includegraphics[scale=.2, clip, trim=0cm 0cm 2cm 0cm]{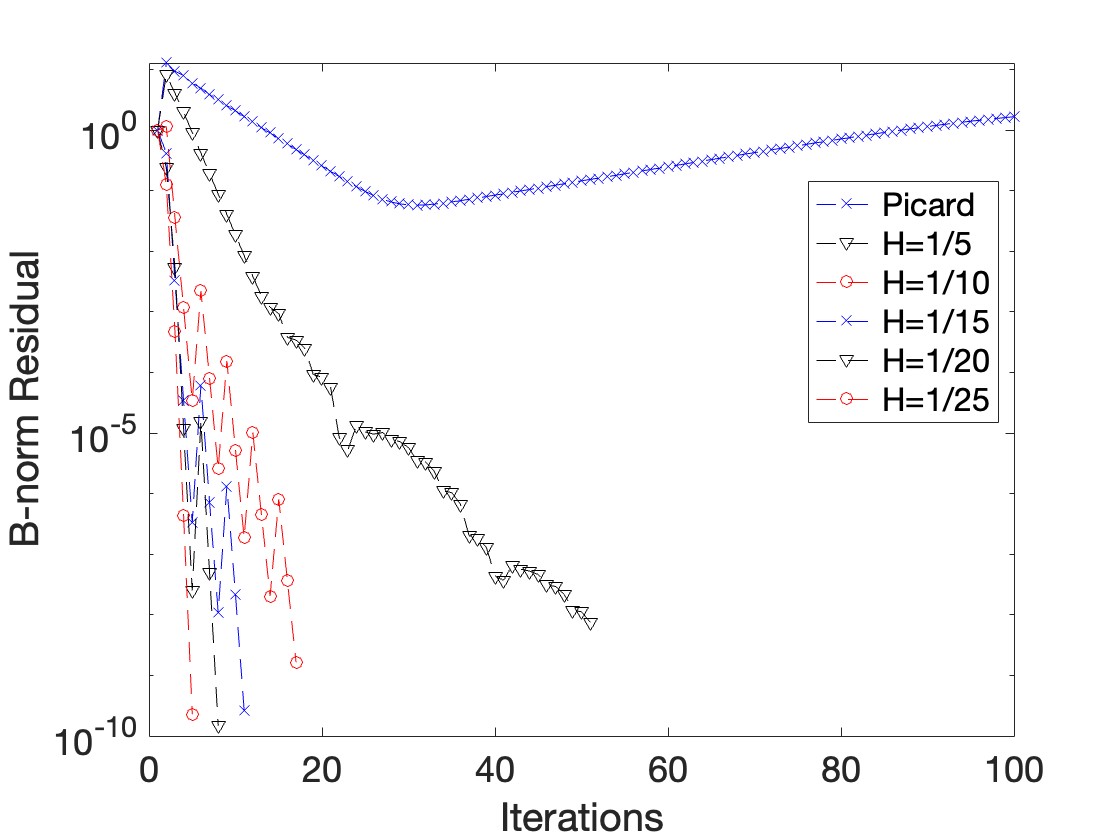}
\includegraphics[scale=.2, clip, trim=0cm 0cm 2cm 0cm]{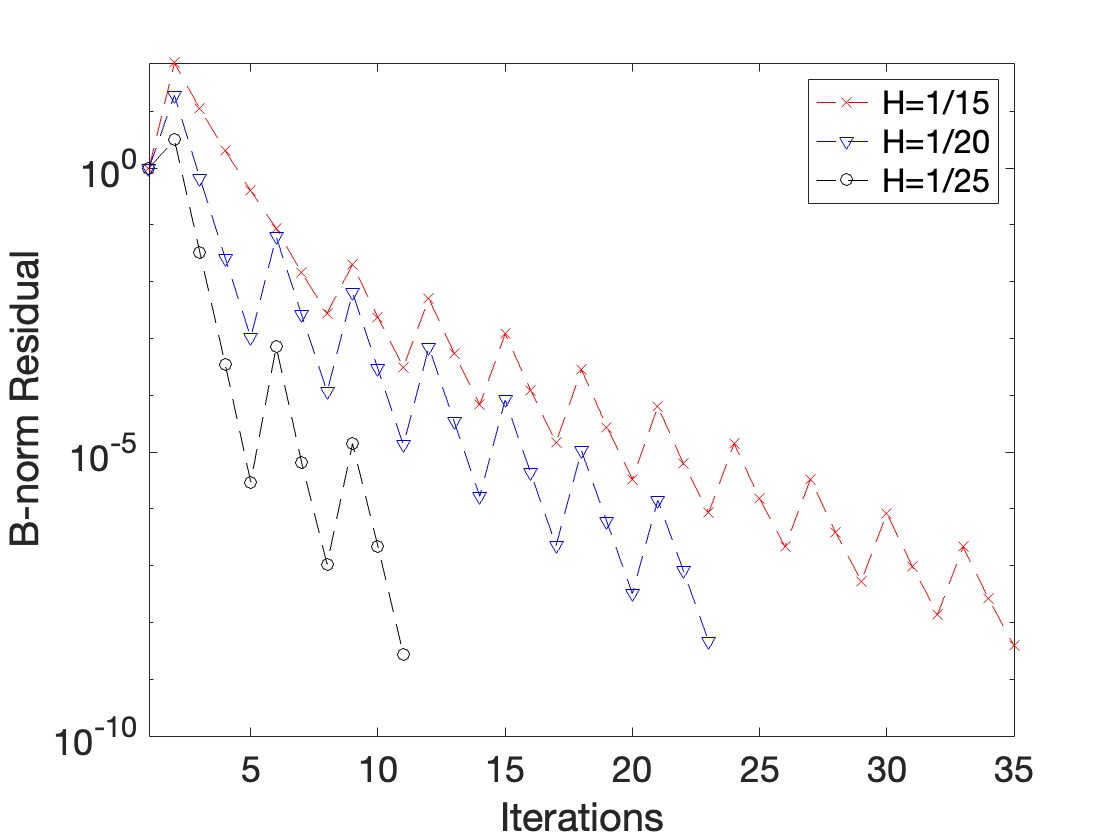}
\caption{Shown above are the convergence plots for the $3d$ heated cavity problem for $Ra=10000$ (left) and $Ra=100000$ (right) with varying $H$ for $\mu_1=\mu_2=1000.$}
\label{3d1e4}
\end{figure}


\subsection{$2d$ heated cavity with CDA using data from only velocity and only temperature}
Next we repeat the same test problem for $Ra=10000$ and $100000$ with $\mu_1=1000, \mu_2=0$ (nudge velocity only) and $\mu_1=0, \mu_2=1000$ (nudge temperature only). For $Ra=10000$ with $\mu_1=1000$ and $\mu_2=0$ we see in Figure \ref{fig:just_u} (left) that CDA Picard converges for $H=1/4, 1/8, 1/16, 1/32$ at approximately the same rate as $\mu_1=\mu_2=1000$. This agrees with our theory. In comparison, for $\mu_1=0$ and $\mu_2=1000$ in Figure \ref{fig:just_T} (left) we see convergence for the same $H$ but it converges slower. 
For $Ra=100000$ with $\mu_1=1000$ and $\mu_2=0$ we see in Figure \ref{fig:just_u} (right) that CDA Picard converges for $H=1/8, 1/16, 1/32$ at a slower rate than $\mu_1=\mu_2=1000$. In comparison, for $\mu_1=0$ and $\mu_2=1000$ in Figure \ref{fig:just_T} (right) we see convergence for the same $H$ but it converges slower. 

\begin{figure}[H]
\centering

\includegraphics[scale=.2]{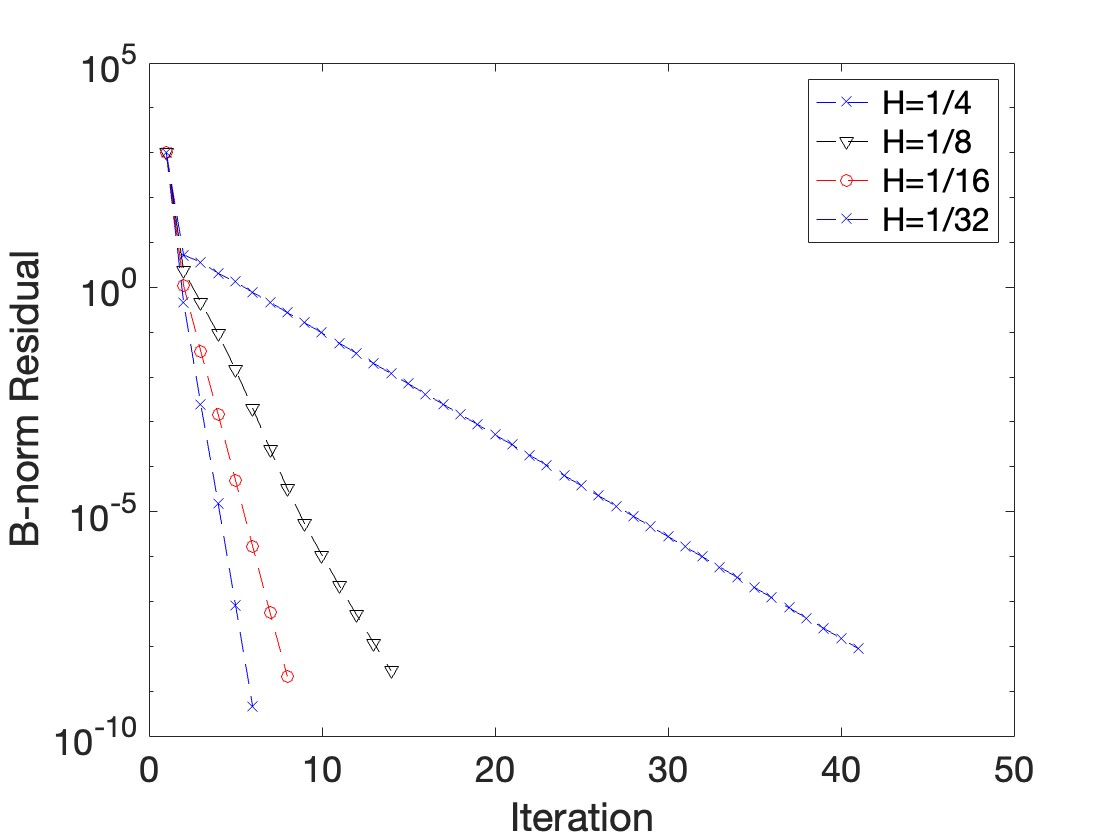}
\includegraphics[scale=.2]{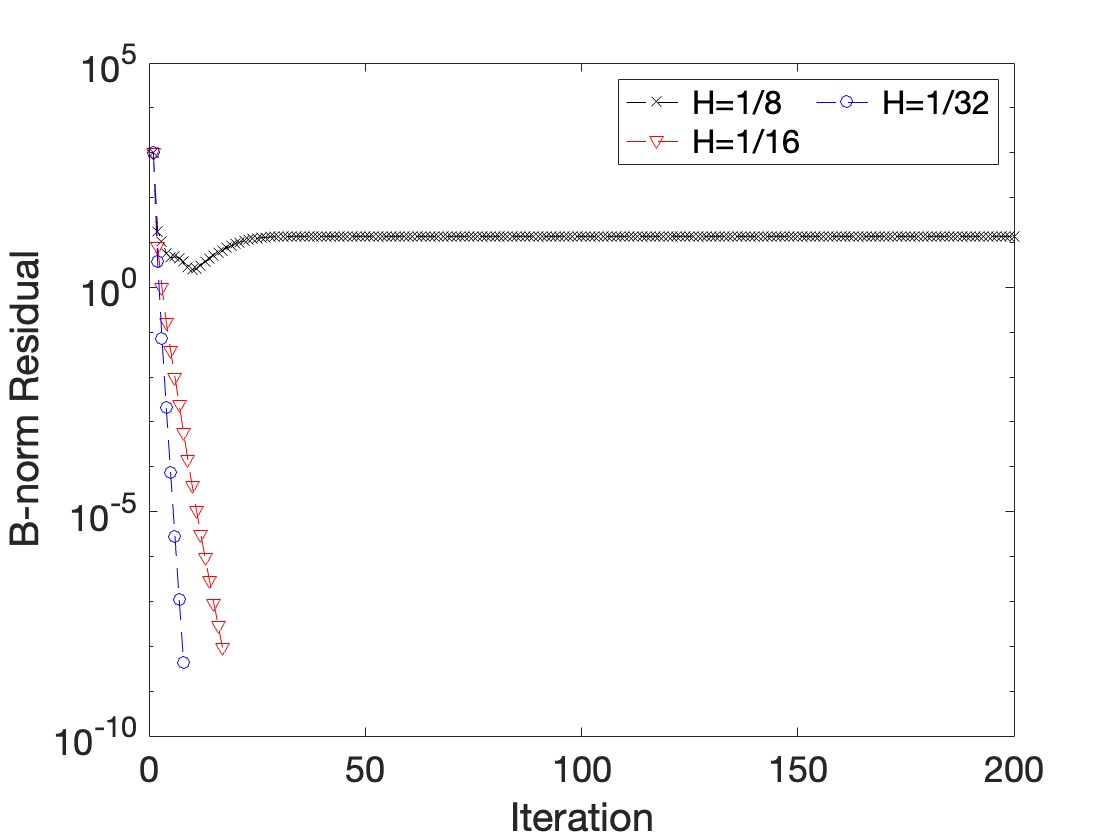}
\caption{Shown above are the convergence plots for $Ra=10000$ (left) and $Ra=100000$ (right) with varying $H$ for $\mu_1=1000, \mu_2=0$}\label{fig:just_u}
\end{figure}


\begin{figure}[H]
\centering
\includegraphics[scale=.2]{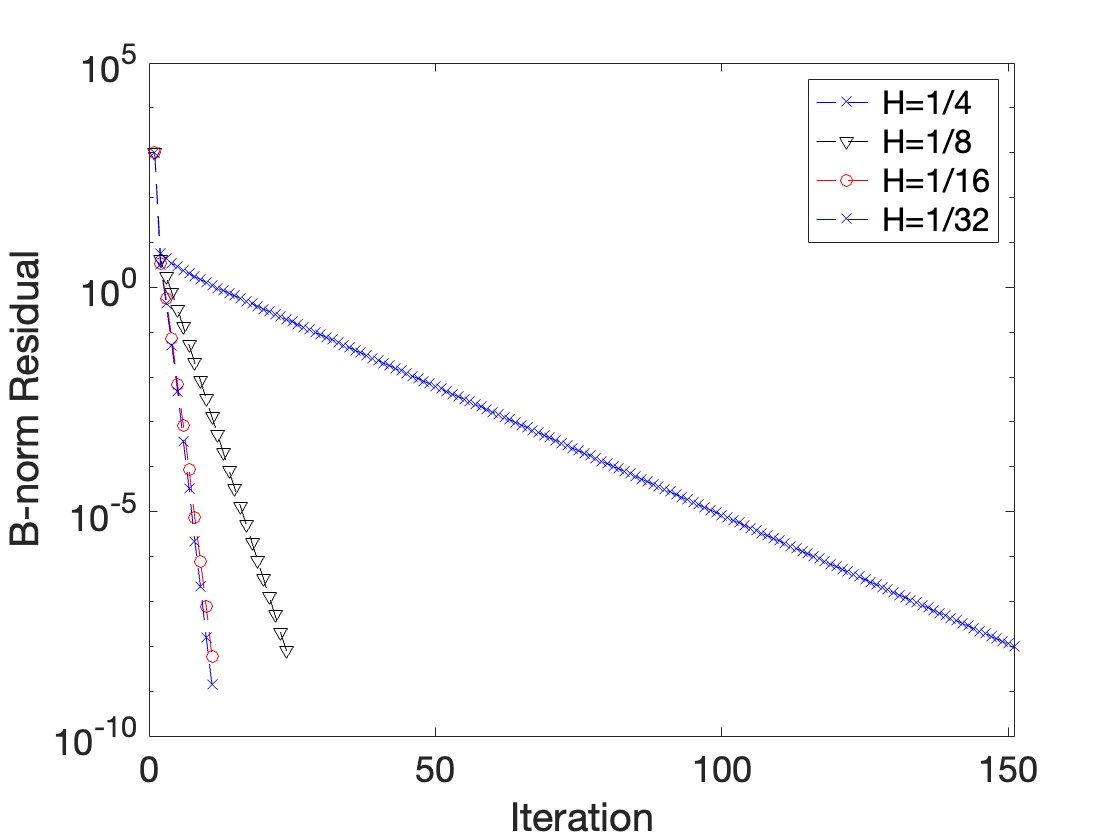}
\includegraphics[scale=.2]{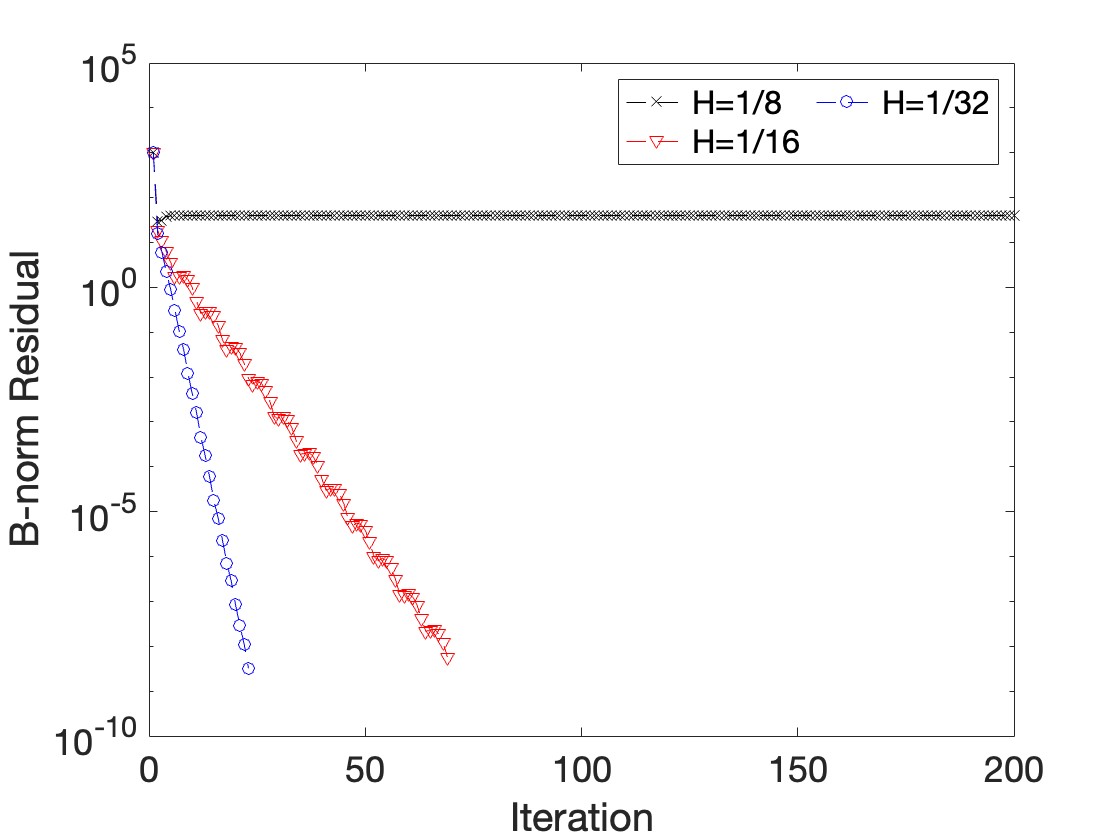}
\caption{Shown above are the convergence plots for $Ra=10000$ (left) and $Ra=100000$ (right) with varying $H$ for $\mu_1=0,\mu_2=1000$}\label{fig:just_T}
\end{figure}


\subsection{CDA-Picard with noise}
We now consider CDA-Picard where the partial solution data has noise. Note that as $\mu_i$ increases the computed solution is increasingly nudged towards the given data. In the case of accurate partial solution data this is desirable but not for noisy partial solution data. Therefore, we consider the case when $\mu_1=\mu_2=1$. We will first consider the $2d$ heated cavity problem for $Ra=100000$ for varying $H$, with noise satisfying $\|\epsilon_u\|,\|\epsilon_T\| = O(10^{-3})$. We compute the noisy partial solution data in the numerical tests by scaling the maximum element of accurate partial solution data by $10^{-3}$, and then adding this to each non-zero term in the partial solution data. For this benchmark test we will provide both error and convergence plots.\\



 Recall that for noise, our theory shows the error of the limit solutoin remains at the same order as the noise. Therefore in order to attain better accuracy using CDA with noisy data we must make an adjustment to the algorithm. When the residual of the iteration is less than $10^{-3}$, CDA-Picard is turned off and the remaining solutions are found using the Newton iteration (without CDA). This is similar to the common method of using Picard to get an initial guess for Newton. However, recall that for these $Ra$ both Picard and Newton fail. Hence CDA-Picard enables Newton to be used.\\
 
 \begin{rem}
 Using this altered approach allows for the use of $\mu_1=\mu_2=1000$ (or larger) because CDA-Picard is not used after a certain accuracy is attained. Therefore the effect of large $\mu_i$ nudging the computed solutions towards the noisy partial solution data is mitigated.
 \end{rem}
 
 For $Ra=100000$ in Figure \ref{fig:noise1e4} we see the residual steadily decreasing while using CDA-Picard. Concurrently, the error is decreasing until it is $O(10^{-3})$. When CDA-Picard is switched to Newton, the residual increases at the first step but immediately decreases quadratically to the desired accuracy. The error for the problem does the same and attains the desired accuracy except the error does not increase at the first time step.\\ 
 
 We now repeat the same problem for heated cavity in $\R^3$ with $Ra=100000$ with $\mu_1=\mu_2=1$, varying $H$, and noise satisfying $\|\epsilon_u\|,\|\epsilon_T\| = O(10^{-3})$. Similarly to the $2d$ tests, we use CDA-Picard until the residual is less than $10^{-3}$ and then switch to Newton (without CDA). We see in Figure \ref{fig:noise1e5} (left) that for $Ra=100000$ CDA-Picard converges with $H=\frac{1}{10}, \frac{1}{15}, \frac{1}{20},$ and $\frac{1}{25}$. When the iterative method changes from CDA-Picard to Newton there is an increase in the residual followed by a decrease for the remainder of the iterations. We also see in Figure \ref{fig:noise1e5} (right) that the errors for CDA-Picard level off at approximately $O(10^{-3})$ for each $H$, which is the same order as the noise.\\ 

\begin{figure}[H]
\centering
\includegraphics[scale=.2]{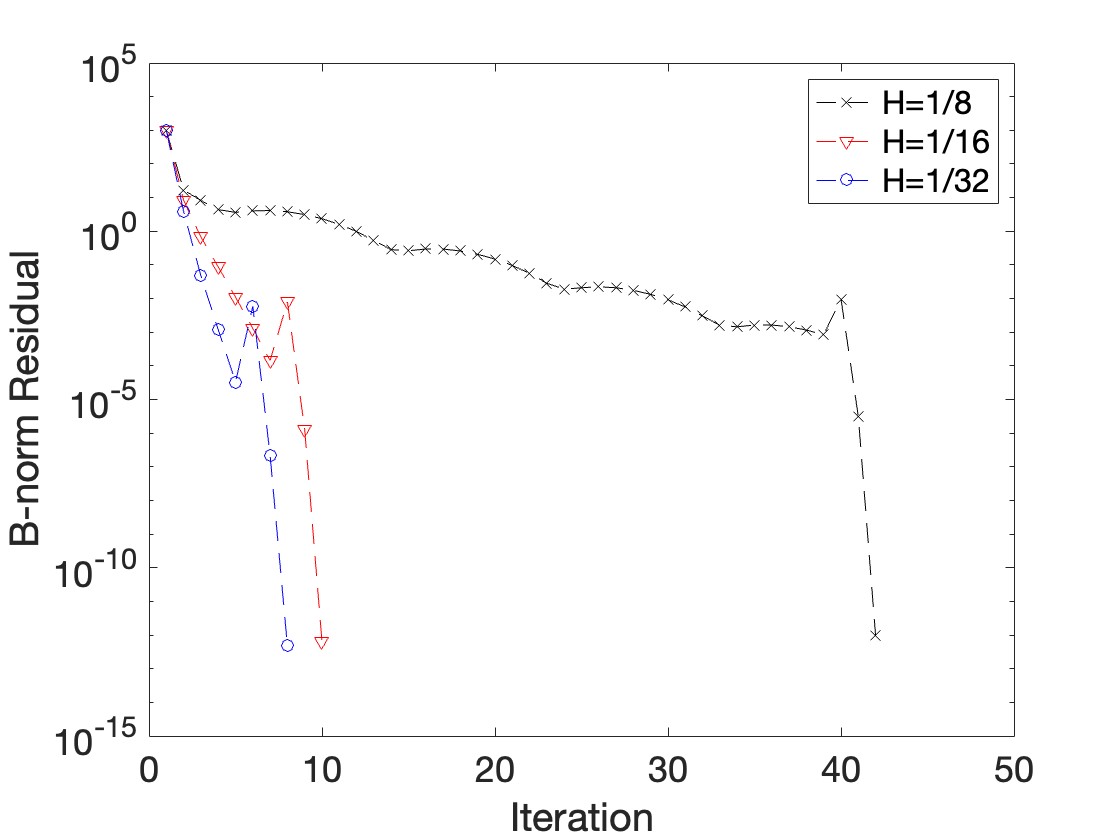}
\includegraphics[scale=.2]{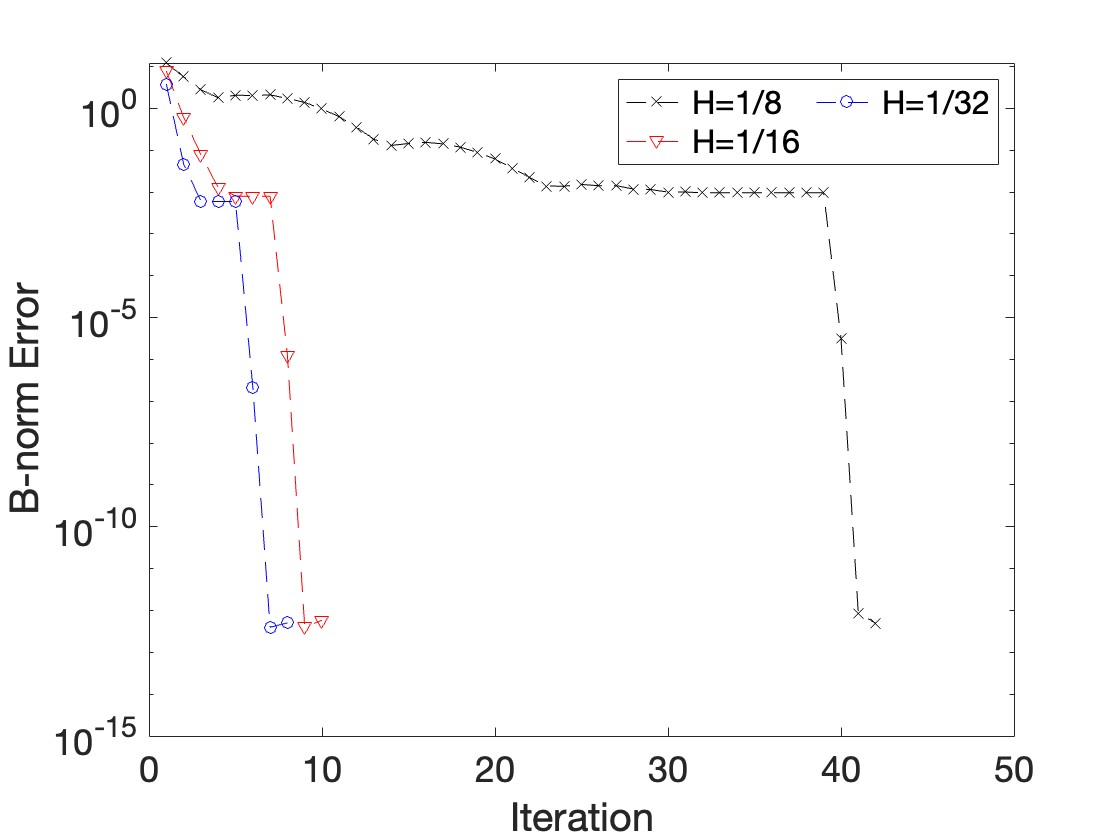}
\caption{Shown above are the convergence plots for the residual (left) and error (right) for the heated cavity problem in $2d$ for $Ra=100000$ with varying $H$, $\mu_1=\mu_2=1$, and signal to noise ratio $10^{-3}$.}\label{fig:noise1e4}
\end{figure}

 
\begin{figure}[H]
\centering
\includegraphics[scale=.2, clip, trim=0cm 0cm 2cm 0cm]{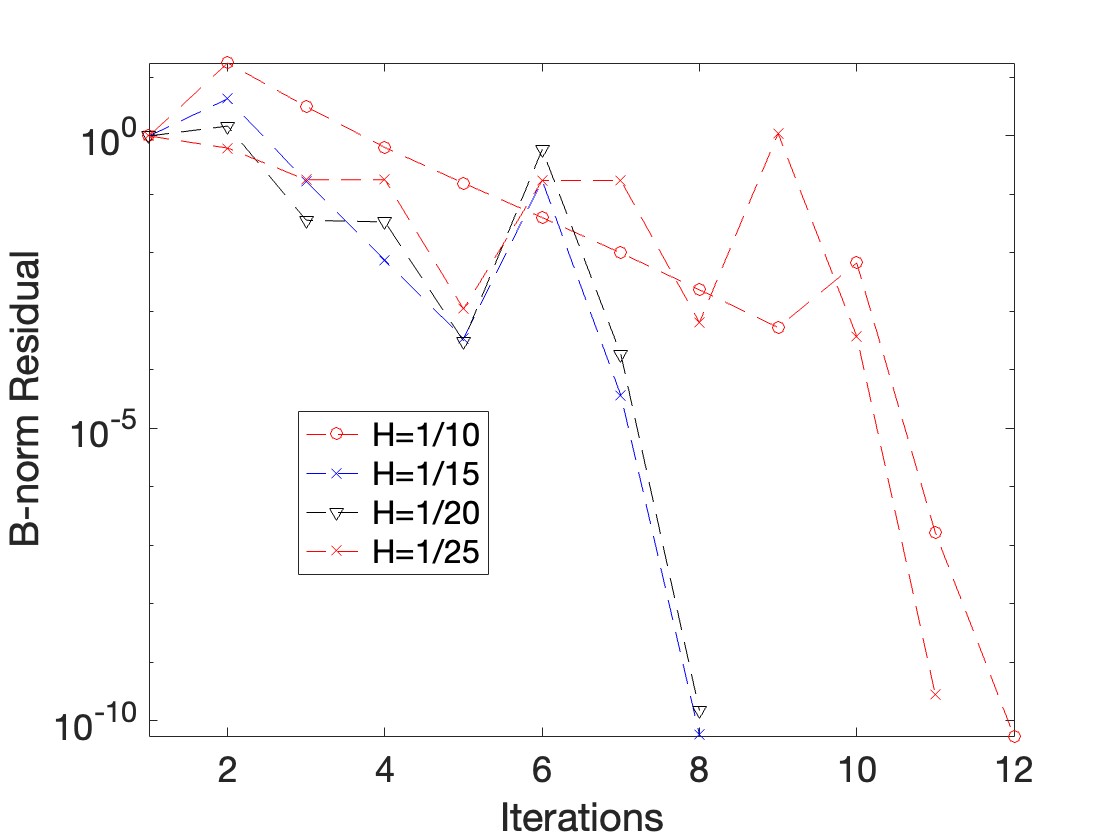}
\includegraphics[scale=.2, clip, trim=0cm 0cm 2cm 0cm]{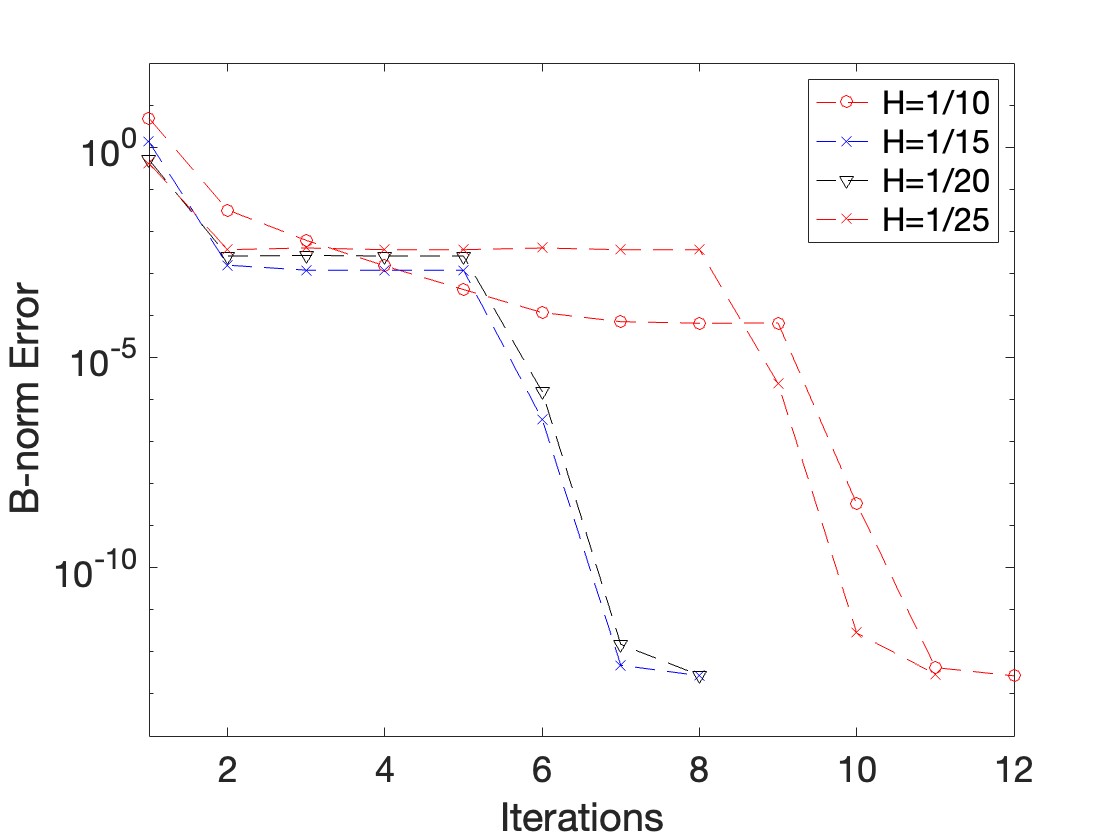}
\caption{Shown above are the convergence plots for the residual (left) and error (right) for the heated cavity problem in $3d$ for $Ra=100000$ with varying $H$, $\mu_1=\mu_2=1$, and signal to noise ratio $10^{-3}$.}
\label{fig:noise1e5}
\end{figure}


\section{Conclusion}
The residual and error of CDA-Picard for the Boussinesq equations converges at a faster rate compared to Picard, with speedup proportional to $H^{1/2}$ when nudging both velocity and temperature or velocity only. Moreover, CDA enables convergence at higher $Ra$. The improved convergence using CDA with no noise on both $u$ and $T$ is demonstrated in both the $2d$ and $3d$ tests where convergence is observed for $Ra$ that fail when using Picard alone. Furthermore the numerical tests  in $2d$ for CDA with no noise on only $u$ and only $T$ also showed improved convergence. 

When the data includes noise, the results are similar with the convergence rate of CDA-Picard containing a multiple of $H^{1/2}$. However, the accuracy of solutions to CDA-Picard is limited by the noise, as one would expect, due to an upper bound on the error containing a term which represents the accuracy of the data. The numerical tests for CDA with noise in both $2d$ and $3d$ demonstrated that the error when using noisy data is limited to the accuracy of the solution data. This was resolved in the numerical tests (achieved convergence) by changing the iteration to Newton once a chosen residual accuracy is attained which shows a possible a solution to the limited accuracy caused by noisy data.

\section{Acknowledgements}
This work was partially supported by the U.S. Department of Energy under award DE-SC0025292.\\ 

\section{Appendix}
\begin{lem}
Any solution to the Boussinesq equations \eqref{BoussWF} satisfies the a priori estimate
\begin{align}
\|\nabla T\|&\leq \|\nabla T\| \leq \kappa^{-1}\|g\|_{D^\ast}=: M_2, \label{TBound}\\
\|\nabla u\|&\leq \|\nabla u\| \leq \nu^{-1}\| f\|_{V^\ast} +R_i C_p^2\nu^{-1} M_2=: M_1\label{uBound}.
\end{align}
\end{lem}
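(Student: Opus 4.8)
The plan is to exploit the triangular structure of the weak formulation \eqref{BoussWF}: the temperature equation involves neither $f$ nor the buoyancy coupling, so I would first derive the bound on $T$ in isolation and then feed that bound into the momentum equation to control $u$. Both steps rest on the skew-symmetry of the trilinear forms recorded in Section~\ref{Prelim}, which annihilates the convective nonlinearity when each equation is tested against its own solution.

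First I would bound the temperature. Taking $w=T$ in the second equation of \eqref{BoussWF} and using $\hat{b}(u,T,T)=0$ leaves $\kappa\|\nabla T\|^2=(g,T)$. I would estimate the right-hand side by the dual pairing, $(g,T)\leq \|g\|_{D^\ast}\|\nabla T\|$, and then divide through by $\|\nabla T\|$ (the case $\|\nabla T\|=0$ being trivial) to obtain $\|\nabla T\|\leq \kappa^{-1}\|g\|_{D^\ast}=M_2$, which is \eqref{TBound}.

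Next I would bound the velocity. Taking $v=u$ in the first equation of \eqref{BoussWF} and using $b(u,u,u)=0$ gives $\nu\|\nabla u\|^2=(f,u)+(R_i(0~T)^T,u)$. The forcing term is controlled by $(f,u)\leq \|f\|_{V^\ast}\|\nabla u\|$. For the buoyancy coupling I would apply Cauchy--Schwarz followed by the Poincar\'e inequality on both factors, $(R_i(0~T)^T,u)\leq R_i\|T\|\|u\|\leq R_i C_p^2\|\nabla T\|\|\nabla u\|$, and then substitute the temperature bound $\|\nabla T\|\leq M_2$ just obtained. Collecting the two estimates yields $\nu\|\nabla u\|^2\leq (\|f\|_{V^\ast}+R_i C_p^2 M_2)\|\nabla u\|$; dividing by $\|\nabla u\|$ gives \eqref{uBound}.

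There is no genuine obstacle here, as this is the standard energy a priori bound. The only points requiring care are the sequential ordering (one must bound $T$ before $u$, since $M_1$ depends on $M_2$) and the convention for the dual norms $\|g\|_{D^\ast}$ and $\|f\|_{V^\ast}$, which must be measured against $\|\nabla\,\cdot\,\|$ so that the pairings close against the viscous and diffusive terms; both are dictated by the structure of \eqref{BoussWF} together with the skew-symmetry identities from Section~\ref{Prelim}.
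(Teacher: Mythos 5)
Your proposal is correct and follows essentially the same route as the paper's appendix proof: test each equation with its own solution, use the skew-symmetry identities $b(u,u,u)=\hat{b}(u,T,T)=0$, bound $(g,T)$ and $(f,u)$ by dual norms, estimate the buoyancy term via Cauchy--Schwarz and Poincar\'e to get $R_i C_p^2\|\nabla T\|\|\nabla u\|$, and substitute the temperature bound $M_2$ into the velocity estimate. The only cosmetic difference is that you bound $T$ first and then $u$, whereas the paper derives both inequalities together before substituting; the mathematics is identical.
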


\begin{proof}
We let $v=u$ and $w=T$ in \eqref{BoussWF} then using skew-symmetry gives us
\begin{equation}\label{B1}
\begin{cases}
\nu\|\nabla u\|^2 =Ri( (0~T)^T,u)+(f,u),\\
\kappa\|\nabla T\|^2 = (g,T).
\end{cases}
\end{equation}
We upper bound the right hand side terms using the dual space norms, Cauchy-Schwarz, and Poincar\'e which yields
\begin{align*}
Ri( (0~T)^T,u) &\leq C_p^2 Ri \|\nabla T\| \|\nabla u\|,\\
(f,u) &\leq \|f\|_{V^\ast} \|\nabla u\|, \\
(g,T) &\leq \|g\|_{D^\ast}\|\nabla T\|.
\end{align*}
Using these bounds in \eqref{B1} and reducing provides us with
\begin{equation*}
\begin{cases}
\|\nabla u\| \leq \nu^{-1}\| f\|_{V^\ast} +R_i C_p^2\nu^{-1}\|  \nabla T\|, \\
\|\nabla T\| \leq \kappa^{-1}\|g\|_{D^\ast}, 
\end{cases}
\end{equation*}
and using the second of these bounds in the first,
\begin{equation*}
\|\nabla u\| \leq \nu^{-1}\| f\|_{V^\ast} +R_i C_p^2\nu^{-1} \kappa^{-1}\|g\|_{D^\ast}.
\end{equation*}
This proves the result.
\end{proof}

\begin{lem}
Let $\alpha_1=C_s \nu^{-1} M_1$ and $\alpha_2=C_s \kappa^{-1} M_2$. If $C_p^2 \nu^{-1}R_i , ~C_p^{1/2}(\alpha_2 + \alpha_1)<1$, then the solutions to \eqref{BoussWF} are unique.
\end{lem}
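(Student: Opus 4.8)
The plan is to run the standard energy argument for uniqueness of weak solutions to a quadratically nonlinear system. Suppose $(u_1,T_1)$ and $(u_2,T_2)$ are two solutions of \eqref{BoussWF}, and set $e=u_1-u_2$ and $e_T=T_1-T_2$. First I would write \eqref{BoussWF} for each pair, subtract the corresponding momentum equations and the corresponding temperature equations, and rewrite the nonlinear differences by the usual add-and-subtract trick: $b(u_1,u_1,v)-b(u_2,u_2,v)=b(e,u_1,v)+b(u_2,e,v)$ and $\hat{b}(u_1,T_1,w)-\hat{b}(u_2,T_2,w)=\hat{b}(e,T_1,w)+\hat{b}(u_2,e_T,w)$.

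Next I would test the difference equations with $v=e$ and $w=e_T$. The skew-symmetry $b(u_2,e,e)=0$ and $\hat{b}(u_2,e_T,e_T)=0$ eliminates the two advection-by-$u_2$ terms, leaving
\begin{align*}
\nu\|\nabla e\|^2 &= R_i((0~e_T)^T,e)-b(e,u_1,e),\\
\kappa\|\nabla e_T\|^2 &= -\hat{b}(e,T_1,e_T).
\end{align*}
I would then bound the right-hand sides using \eqref{bbound}, Cauchy--Schwarz and Poincar\'e for the buoyancy coupling, and the a priori bounds $\|\nabla u_1\|\le M_1$, $\|\nabla T_1\|\le M_2$ from Lemma \ref{BoussStability}. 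This yields $\kappa\|\nabla e_T\|^2\le C_p^{1/2}C_s M_2\|\nabla e\|\,\|\nabla e_T\|$ and $\nu\|\nabla e\|^2\le R_i C_p^2\|\nabla e_T\|\,\|\nabla e\|+C_p^{1/2}C_sM_1\|\nabla e\|^2$. Dividing through (the degenerate case $\|\nabla e\|=0$ forces $e=0$ by Poincar\'e, and then $e_T=0$ directly) and inserting the definitions $\alpha_1=C_s\nu^{-1}M_1$, $\alpha_2=C_s\kappa^{-1}M_2$ gives the coupled pair $\|\nabla e_T\|\le C_p^{1/2}\alpha_2\|\nabla e\|$ and $\|\nabla e\|\le C_p^2\nu^{-1}R_i\|\nabla e_T\|+C_p^{1/2}\alpha_1\|\nabla e\|$.

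The final step---and the one place where care is needed to recover the stated hypotheses rather than a worse condition---is to combine these two inequalities by \emph{adding} them rather than substituting one into the other. Adding and regrouping produces
$$\bigl(1-C_p^{1/2}(\alpha_1+\alpha_2)\bigr)\|\nabla e\|+\bigl(1-C_p^2\nu^{-1}R_i\bigr)\|\nabla e_T\|\le 0.$$
Both hypotheses $C_p^2\nu^{-1}R_i<1$ and $C_p^{1/2}(\alpha_1+\alpha_2)<1$ make the two coefficients strictly positive, so each norm must vanish; hence $e=0$ and $e_T=0$, and the solution is unique. I expect the only genuine subtlety to be organizing the buoyancy term $R_i((0~e_T)^T,e)$ so that it carries exactly the factor $C_p^2\nu^{-1}R_i$ that pairs with the first hypothesis, and recognizing that the two smallness conditions are meant to be applied separately to the two summands after adding, rather than folded together by back-substitution (which would conflate them into a single, strictly stronger requirement).
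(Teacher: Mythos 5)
Your proof is correct and takes essentially the same route as the paper's: subtract the two solutions, test with $(e,e_T)$, kill two trilinear terms by skew-symmetry, bound the remaining ones via \eqref{bbound} and the a priori estimates of Lemma \ref{BoussStability}, and then \emph{add} the two resulting inequalities so that $C_p^{1/2}(\alpha_1+\alpha_2)<1$ and $C_p^2\nu^{-1}R_i<1$ each control a separate coefficient. The only (immaterial) difference is your mirror-image splitting of the nonlinear differences---you retain $b(e,u_1,\cdot)$ and $\hat{b}(e,T_1,\cdot)$ where the paper retains $b(e_u,u_2,\cdot)$ and $\hat{b}(e_u,T_2,\cdot)$---which works equally well since both solutions obey the same bounds $M_1$, $M_2$.
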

\begin{proof}
Supposing two solutions $(u_1,T_1)$ and $(u_2,T_2)$ to \eqref{BoussWF} exists, and define $e_u= u_1-u_2$ and $e_T=T_1-T_2$. Now subtracting the systems with these two solutions gives $\forall v\in V, w\in D$,
\begin{equation*}
\begin{cases}
b(u_1,e_u,v)+b(e_u,u_2,v)+\nu(\nabla e_u,\nabla v)&= Ri( (0~e_T)^T,v),\\
\hat{b}(u_1, e_T,w)+\hat{b}(e_u,T_2,w)+\kappa(\nabla e_T,\nabla w)&=0.
\end{cases}
\end{equation*}
Taking $v=e_u$ and $w=e_T$ vanishes two nonlinear terms and leaves
\begin{equation*}
\begin{cases}
\nu\|\nabla e_u\|^2&= Ri( (0~e_T)^T,e_u)-b(e_u,u_2,e_u)\\
&\leq C_p^2 R_i \|\nabla e_T\|\|\nabla e_u\| + C_pC_s\|\nabla e_u\|^2\|\nabla u_2\|,\\
&\\
\kappa\|\nabla e_T\|^2&=-\hat{b}(e_u,T_2,e_T)\\
&\leq C_p^{1/2}C_s \|\nabla T_2\| \|\nabla e_u\|\|\nabla e_T\|.
\end{cases}
\end{equation*}
Next, using the bounds \eqref{uBound} and \eqref{TBound} and simplifying gives
\begin{equation}\label{eq:B3}
\begin{cases}
\|\nabla e_u\|&\leq C_p^2 \nu^{-1}R_i \|\nabla e_T\|+ C_p^{1/2}\alpha_1\|\nabla e_u\|,\\
\|\nabla e_T\|&\leq C_p^{1/2}\alpha_2 \|\nabla e_u\|.
\end{cases}
\end{equation}
Adding these and simplifying results in
\begin{equation*}
(1-C_p^{1/2}\alpha_1-C_p^{1/2}\alpha_2)\|\nabla e_u\|+(1-C_p^2 \nu^{-1}R_i )\|\nabla e_T\|\leq 0.\\
\end{equation*}
This provides the uniqueness of the velocity due to the assumption on the data. With this, uniqueness of the temperature follows immediately from the second bound in \eqref{eq:B3}.
\end{proof}

\begin{lem}\
Any solution to the Picard iteration for the Boussinesq equations satisfies the a priori estimate: for any $k=0,1,2...$,
\begin{align*}
 \|\nabla \tkone\|\leq M_2,\\
\|\nabla \ukone\| \leq M_1,
\end{align*}
\end{lem}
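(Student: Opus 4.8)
The plan is to mirror the a priori estimate for the continuous system (Lemma \ref{BoussStability}), exploiting the skew-symmetry of $b$ and $\hat b$ to eliminate the convective terms. The essential observation is that in the Picard formulation \eqref{PicardWF} the convection coefficient is the previous iterate $\uk$, yet skew-symmetry gives $b(\uk,\ukone,\ukone)=0$ and $\hat b(\uk,\tkone,\tkone)=0$ regardless of $\uk$. Consequently each iterate is bounded directly in terms of the problem data, and no genuine induction is required.

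First I would test the temperature equation of \eqref{PicardWF} with $w=\tkone$. The nonlinear term $\hat b(\uk,\tkone,\tkone)$ vanishes, leaving $\kappa\|\nabla\tkone\|^2=(g,\tkone)$. Bounding the right-hand side by the dual norm as $(g,\tkone)\le\|g\|_{D^\ast}\|\nabla\tkone\|$ and dividing by $\|\nabla\tkone\|$ yields $\|\nabla\tkone\|\le\kappa^{-1}\|g\|_{D^\ast}=M_2$ at once. This step is completely decoupled from the velocity, since the temperature bound depends only on $g$.

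Next I would test the momentum equation with $v=\ukone$. Skew-symmetry eliminates $b(\uk,\ukone,\ukone)$, leaving $\nu\|\nabla\ukone\|^2=(f,\ukone)+R_i((0~\tkone)^T,\ukone)$. I would bound $(f,\ukone)\le\|f\|_{V^\ast}\|\nabla\ukone\|$ and, using Cauchy-Schwarz together with the Poincar\'e inequality, $R_i((0~\tkone)^T,\ukone)\le C_p^2R_i\|\nabla\tkone\|\|\nabla\ukone\|$. Dividing through by $\|\nabla\ukone\|$ and substituting the already-established bound $\|\nabla\tkone\|\le M_2$ gives $\|\nabla\ukone\|\le\nu^{-1}\|f\|_{V^\ast}+R_iC_p^2\nu^{-1}M_2=M_1$.

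I do not anticipate a serious obstacle: the argument is identical in structure to the continuous estimate. The one point worth emphasizing is precisely why no induction is needed. Whereas one might expect the bound on $\ukone$ to inherit a dependence on $\uk$ through the convective term, skew-symmetry annihilates that term in the energy identity, so the estimates hold uniformly in $k$ as soon as the data is fixed and any initial guess $u^0$ is chosen.
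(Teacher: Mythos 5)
Your proof is correct and follows exactly the argument the paper intends: its appendix proves this lemma by noting it is ``proved analogously to Lemma \ref{BoussStability}'', which is precisely the energy argument you carried out (test with $w=\tkone$ and $v=\ukone$, use skew-symmetry of $\hat{b}$ and $b$ to kill the convective terms, bound the data terms by dual norms and Poincar\'e, and feed the temperature bound into the velocity estimate). Your added remark that skew-symmetry holds for any transport field $\uk$, so the bounds are uniform in $k$ with no induction, is exactly the right justification and matches the paper's reasoning.
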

\begin{proof}
These results are proved analogously to those of Lemma \ref{BoussStability}.
\end{proof}

\begin{lem}
The Picard iteration \eqref{PicardWF} with data satisfying $\min\left\{1-\nu^{-1}\frac{C_p^2 R_i}{2} ,1-\kappa^{-1}\frac{C_p^2 R_i}{2} \right\}>0$, admits a unique solution. 
\end{lem}

\begin{rem}
Note that \eqref{PicardWF} is linear. This combined with Lemma \ref{Lemma:PicardBound} immediately gives uniqueness. Furthermore, if the domain $\Omega$ is finite dimensional then this also implies existence of solutions.
\end{rem}

\begin{proof}
Let $Y= V\times D$ and at iteration $k+1$ define $A: Y\times Y\rightarrow \R$ and $F: Y\rightarrow \R$ by
\begin{align*}
A( (\hat{u} ,\hat{T}), (v,w)):&=b(\uk , \hat{u}, v) +\nu(\nabla \hat{u},\nabla v) + \hat{b}(\uk , \hat{T}, w) + \kappa (\nabla \hat{T},\nabla w)-R_i ((0 ~\hat{T})^T,v), \\
F((v,w))&= (f,v) + (g,w),
\end{align*}
so that the Picard iteration is given by $A( (\hat{u},\hat{T}), (v,w)) = F((v,w))$. Consider $A( (\hat{u},\hat{T}), (v,w))$. Using \eqref{bbound}, Cauchy-Schwarz, and Young's inequality we lower bound the equation as
\begin{align*}
A( (\hat{u},\hat{T}),(\hat{u},\hat{T})) &= b(\uk, \hat{u}, \hat{u}) +\nu\|\nabla \hat{u} \|^2+ \hat{b}(\uk, \hat{T}, \hat{T}) + \kappa \|\nabla \hat{T}\|^2-R_i ((0 ~\hat{T})^T,\hat{u})  \\
&\geq \nu\|\nabla \hat{u}\|^2 +  \kappa \|\nabla \hat{T}\|^2 -\frac{C_p^2 R_i}{2} \|\nabla \hat{T}\|^2 - \frac{C_p^2 R_i}{2}\|\nabla \hat{u}\|^2 \\
&\geq \min\left\{\nu-\frac{C_p^2 R_i}{2} ,\kappa-\frac{C_p^2 R_i}{2} \right\}\| (\hat{u},\hat{T})\|^2_Y.
\end{align*}
Hence $A$ is coercive. Continuity of $A$ and $F$ follow easily using the bounds and lemmas above. Thus Lax-Milgram applies and gives existence and uniqueness of \eqref{PicardWF}.\\
\end{proof}

\begin{lem}\label{Lemma:PicardConv}
Consider the Picard iteration \eqref{PicardWF} with data satisfying $C_p^2\nu^{-1}R_i<1$ and $C_p^{1/2}(\alpha_1+\alpha_2) <1$. Then the iteration converges linearly with rate $C_p^{1/2}(\alpha_1+\alpha_2)$. In particular we have
\begin{equation*}
 \|\nabla (T-T^{k+1}) \| \leq  C_p^{1/2}\alpha_2 \|\nabla(u-u^k)\|,
\end{equation*}
and
\begin{equation*}
\|\nabla (u-u^{k+1})\| \leq C_p^{1/2}(\alpha_1+ \alpha_2) \|\nabla(u-u^k)\|.
\end{equation*}
\end{lem}

\begin{proof}
Let $\ekone=u-u^{k+1}$ and $\etkone=T-T^{k+1}$. We subtract \eqref{BoussWF} from \eqref{PicardWF} and choose $v=\ekone$ and $w=\etkone$. Using skew-symmetry , vanishes two non-linear terms and leaves the equality
\begin{equation*}
\begin{cases}
b(\ek, u,\ekone) +\nu\|\nabla \ekone\|^2 &= R_i  ((0 ~\etkone)^T,\ekone), \\
\hat{b}(\ek, T,\etkone)+\kappa \|\nabla \etkone\|^2&=0 .
\end{cases}
\end{equation*}
Next we use Cauchy-Schwarz, Poincar\'e, and \eqref{bbound} to upper bound these equations as
\begin{equation*}
\begin{cases}
\nu\|\nabla \ekone\|^2 &\leq C_p^2 R_i \|\nabla\etkone\| \|\nabla\ekone\| + C_pC_s\|\nabla\ek\| \|\nabla u\| \|\nabla\ekone\|,  \\
\kappa \|\nabla \etkone\|^2&\leq C_pC_s\|\nabla\ek\| \|\nabla T\| \|\nabla\etkone\|.
\end{cases}
\end{equation*}
Then we reduce and apply Lemma \ref{BoussStability} to get
\begin{equation*}
\begin{cases}
\|\nabla \ekone\|&\leq C_p^2 \nu^{-1} R_i \|\nabla\etkone\| + C_p\alpha_1\|\nabla\ek\| ,  \\
\|\nabla \etkone\|&\leq C_p\alpha_2 \|\nabla\ek\| .
\end{cases}
\end{equation*}
this gives the bound
\begin{equation*}
 \|\nabla \etkone \| \leq  C_p\alpha_2 \|\nabla\ek\|,
\end{equation*}
Adding the equations and reducing gives
\begin{align*}
\|\nabla \ekone\|+ (1-C_p^2 R_i\nu^{-1})\|\nabla \etkone\|^2 
& \leq C_p(\alpha_1+\alpha_2) \|\nabla\ek\|.
\end{align*}
Finally, using the assumptions on the data finishes the proof.
\end{proof}

\bibliographystyle{plain}
\bibliography{graddiv}

\end{document}